\documentclass[11pt]{article}
\usepackage[T1]{fontenc}
\usepackage[utf8]{inputenc}
\usepackage{lmodern}
\usepackage[a4paper,margin=27mm]{geometry}
\usepackage{amsmath,amssymb,amsthm}
\usepackage[hidelinks]{hyperref}
\usepackage{color}

\allowdisplaybreaks[1]
\newcommand{\R}{\mathbb R}
\newcommand{\bbern}{\mathbf b}
\theoremstyle{plain}
\newtheorem{theorem}{Theorem}[section]
\newtheorem{lemma}[theorem]{Lemma}
\newtheorem{proposition}[theorem]{Proposition}

\theoremstyle{remark}

\title{The Berry--Esseen Estimate\\
in the Boolean Central Limit Theorem}
\author{Makoto Maejima \and Noriyoshi Sakuma}
\date{\today}

\begin{document}
\maketitle

\begin{abstract}
We estimate the L\'evy distance to the symmetric Bernoulli distribution
for normalized Boolean convolution powers of a probability measure with mean zero and variance one.
If the absolute moment of order $r$ is finite, we obtain the upper bounds
$o(n^{-(r-2)/6})$ for $2<r<5$ and $O(n^{-1/2})$ for $r\ge5$.
The proof relies on an inequality that controls the L\'evy distance to the Bernoulli law by the values of the reciprocal Cauchy transform at two points.
For each $r\ge2$, we also construct a single fixed distribution whose absolute moments
are finite exactly up to order $r$, and show that the exponent $\min\{(r-2)/6,1/2\}$ cannot be improved.
Under only a finite second moment assumption, the convergence can be arbitrarily slow.
\end{abstract}


\section{Introduction}

In this paper, we study the rate of convergence in the Boolean central limit theorem.
In the classical and free central limit theorem, Berry--Esseen estimates are formulated for the Kolmogorov distance, which is the supremum of the absolute distance of the distribution functions of two probability measures.
However, in the Boolean case, the normalized partial sums of Boolean independent random variables do not necessarily converge in the sense of Kolmogorov distance,
since the limiting distribution (the symmetric Bernoulli distribution) 
$$\bbern=\frac12(\delta_{-1}+\delta_1)$$ has atoms.
For example, we can see this fact by Example $1$ of \cite{AS18}, because Kolmogorov distance of this example and $\bbern$ is always $\frac{1}{2}$.
Therefore we use a weaker distance, which is the L\'evy distance defined below, 
whose convergence to $0$ is equivalent to weak convergence of measures.
For probability measures $\mu$ and $\nu$ on the real line, the L\'evy distance of $\mu$ and $\nu$ is defined by
\[
 L(\mu,\nu)=\inf\{\varepsilon>0:
 \mu((-\infty, x-\varepsilon])-\varepsilon\le\nu((-\infty,x])
 \le \mu((-\infty,x+\varepsilon])+\varepsilon\quad(x\in\R)\}.
\]

For a probability measure $\mu$, we write
\[
 m_j(\mu)=\int_\R x^j\,\mu(dx),\qquad
 \beta_r(\mu)=\int_\R |x|^r\,\mu(dx)\quad(r>0).
\]
We use $m_j$ only for integer orders for which the corresponding absolute moment is finite.
Throughout this paper, we assume that $m_1(\mu)=0$ and $m_2(\mu)=1$, and put
\[
 \mu_n=D_{1/\sqrt n}\mu^{\uplus n}.
\]
Here $\uplus$ denotes Boolean convolution, and $D_c\mu$ is the image measure
of $\mu$ under the map $x\mapsto cx$.
The Boolean central limit theorem \cite[Theorem~3.4]{SW97} states that $\mu_n$ converges weakly to $\bbern$.

The purpose of this paper is to give upper bounds for the L\'evy distance $L(\mu_n,\bbern)$ in terms of the order $r$ of a finite absolute moment of $\mu$.
We also show that the resulting exponent is optimal, by constructing fixed distributions $\mu$ for which it cannot be improved.
Our main theorem, Theorem~\ref{thm:r-upper}, states that there exists an absolute constant $C$ such that
\begin{equation}\label{eq:overview}
 L(\mu_n,\bbern)\le
 \begin{cases}
 C\beta_r(\mu)^{1/3}n^{-(r-2)/6},&2<r\le5,\\
 C\beta_r(\mu)^{1/(r-2)}n^{-1/2},&r\ge5.
 \end{cases}
\end{equation}
In Theorem~\ref{thm:family} and Proposition~\ref{prop:drift}, we show that the exponent in \eqref{eq:overview} is best possible: for each $r>2$ there is a fixed $\mu$ with $\beta_r(\mu)<\infty$ for which $L(\mu_n,\bbern)=O(n^{-\gamma})$ fails for every $\gamma>\min\{(r-2)/6,1/2\}$.

Furthermore, under the second moment assumption alone no rate of convergence can be guaranteed: Theorem~\ref{thm:arbitrary-slow} shows that for every positive sequence $\varepsilon_n\to0$ there exists a single $\mu$ with $\limsup_{n}L(\mu_n,\bbern)/\varepsilon_n=\infty$.

Let us describe previous research on this problem.
Arizmendi and Salazar \cite{AS18} proved
\begin{align}\label{1}
L(\mu_n,\bbern)\le\frac72\bigl((m_4(\mu)-1)/n\bigr)^{1/3}
\end{align}
 under a finite fourth moment assumption \cite[Theorem~1]{AS18}  and $L(\mu_n,\bbern)\le2K/ n^{1/2}$ 
if $\operatorname{supp}\mu\subset[-K,K]$ \cite[Theorem~2]{AS18}.
To get \eqref{1} they first proved  
\begin{align}\label{2}
L(\mu,\mathbf b)\le \frac72\left(m_4(\mu)-1\right)^{1/3}
\end{align}
\cite[Theorem 3]{AS18} and also the optimality of the exponent $1/3$ in \eqref{2} uniformly over $\mu$ \cite[Proposition~3]{AS18}.
However, this does not imply the optimality of the convergence rate for the Boolean convolution powers of a fixed probability measure.
In contrast, Theorem~\ref{thm:family} with \(r=4\) provides a single probability measure
 \(\mu\) with \(m_4(\mu)<\infty\) such that \(L(\mu_n,\mathbf b)\) is not \(O(n^{-\gamma})\) for any \(\gamma>1/3\).

Our study was motivated by Salazar \cite{Salazar22}, who obtained the rate $n^{-1/2}$ under a finite sixth moment assumption, with a constant depending on $\mu$ and for all sufficiently large $n$. 
It is natural to ask whether a finite fourth moment, or even a finite absolute third moment, suffices for the rate $n^{-1/2}$. 
Our results answer this question in the negative: by Theorem~\ref{thm:family}, no absolute moment condition of order $r<5$ implies $L(\mu_n,\bbern)=O(n^{-1/2})$, whereas by Theorem~\ref{thm:r-upper} the absolute moment of order five does, with an absolute constant and for all $n\ge1$. 
In particular, \eqref{eq:overview} with $r=5$ improves the result of \cite{Salazar22}.

 Berry--Esseen type bounds for sums of Boolean independent random variables were also obtained in the operator-valued setting by Arizmendi, Banna and Tseng \cite{ABT} by a Lindeberg method, in terms of second and fourth moments. 
Hamdan \cite{Hamdan26} obtained the rate $n^{-1/2}$ under a finite third moment assumption, but for a distance defined through the self-energy $z-F_\mu(z)$ on a Stolz angle; his estimate implies weak convergence but does not yield a rate for the L\'evy distance. 
For comparison, in the classical and in the free central limit theorem a finite absolute third moment gives the rate $n^{-1/2}$ in the Kolmogorov distance (see, for example, \cite{Petrov} and \cite{MS}), whereas in the Boolean central limit theorem a finite absolute third moment gives only the exponent $1/6$ (Theorem~\ref{thm:r-upper} with $r=3$), and this exponent is sharp (Theorem~\ref{thm:family} with $r=3$).

The organization of this paper is the following.
In Section~\ref{sec:two}, we prove a two-point estimate using windows.
In Section~\ref{sec:upper}, we derive upper bounds from moment assumptions.
Section~\ref{sec:sharpness} proves optimality using fixed distributions,
and Section~\ref{sec:variance} treats the case of only a finite second moment.
The letter $C$ without a subscript denotes an absolute constant which may change from line to line.
In $O_{\mu,r}$, dependence on the indicated parameters is allowed.
The notation $\asymp$ means comparison from above and below by positive constant multiples;
subscripts indicate the dependence of these constants.


\section{A two-point estimate using windows}\label{sec:two}
For a probability measure $\mu$, let
\[
 G_\mu(z)=\int_\R\frac{\mu(dx)}{z-x},\qquad
 F_\mu(z)=\frac1{G_\mu(z)}\qquad(\operatorname{Im}z>0)
\]
be its Cauchy transform and reciprocal Cauchy transform, respectively.
We recall the standard dilation and Boolean convolution formulas
\cite[Sections~2.2--2.3]{Salazar22}: for $c>0$,
\[
 G_{D_c\mu}(z)=c^{-1}G_\mu(z/c),\qquad
 F_{D_c\mu}(z)=cF_\mu(z/c),\qquad
 F_{\mu^{\uplus n}}(z)=nF_\mu(z)-(n-1)z.
\]

\begin{lemma}[Two-point estimate]\label{lem:two}
For every probability measure $\mu$, $a\in\R$ and $0<y\le1/2$, we have
\begin{equation}\label{eq:two}
 L(\mu,\bbern)\le |a|+3y+
 \frac5y\max_{\sigma=\pm1}|F_\mu(a+\sigma+iy)-2iy|.
\end{equation}
\end{lemma}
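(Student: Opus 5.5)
The plan is to turn the hypothesis on $F_\mu$ at the two points $z_\sigma=a+\sigma+iy$ into statements about the mass $\mu$ puts in windows of radius comparable to $y$ around $a\pm1$. From those mass statements the L\'evy distance to $\bbern$ is read off directly. Put $\delta=\max_{\sigma=\pm1}|F_\mu(z_\sigma)-2iy|$.

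\emph{Step 1 (from $F_\mu$ to $G_\mu$).} First dispose of the trivial case. If $\delta\ge y/5$, then the right-hand side of \eqref{eq:two} is at least $1$, which already dominates $L(\mu,\bbern)$. So assume $\delta<y/5$. Then $|F_\mu(z_\sigma)|\ge 2y-\delta\ge y$, and
\[
 \Bigl|G_\mu(z_\sigma)-\frac1{2iy}\Bigr|=\frac{|F_\mu(z_\sigma)-2iy|}{2y\,|F_\mu(z_\sigma)|}\le\frac{\delta}{2y^2}.
\]
Introduce the Poisson-type kernels $h_\sigma(t)=y^2/((t-a-\sigma)^2+y^2)\in(0,1]$. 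We have $-y\operatorname{Im}G_\mu(z_\sigma)=\int h_\sigma\,d\mu$, so the estimate above gives
\[
 \Bigl|\int h_\sigma\,d\mu-\tfrac12\Bigr|\le \frac{\delta}{2y}.
\]
This is the only information I plan to use: the imaginary parts suffice.

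\emph{Step 2 (mass in windows).} Let $W_\sigma=[a+\sigma-Ky,\,a+\sigma+Ky]$, with $K$ a fixed constant (say $K=2$ or $3$), and let $E=\R\setminus(W_+\cup W_-)$. Since $y\le1/2$, the two windows are disjoint.
\begin{itemize}
\item On $E$ we have $h_++h_-\le 1/(1+K^2)+y^2\le c<1$. On $W_\sigma$ we have $h_\sigma\le1$ and $h_{-\sigma}\le y^2/(2-Ky)^2$.
\item Summing the two estimates of Step 1 gives $1-\delta/y\le\int(h_++h_-)\,d\mu\le \mu(W_+\cup W_-)+c\,\mu(E)+O(y^2)$. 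Hence $\mu(E)\le C(\delta/y+y^2)$.
\item Then $\mu(W_\sigma)\ge \int h_\sigma\,d\mu-\mu(E)-O(y^2)\ge\frac12-C(\delta/y+y^2)$. Since $\mu(W_{-\sigma})$ obeys the same lower bound, we also get $\mu(W_\sigma)\le\frac12+C(\delta/y+y^2)$.
\end{itemize}
The $y^2$ terms can be absorbed into the $3y$ of \eqref{eq:two}, because $y^2\le y/2$.

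\emph{Step 3 (L\'evy distance).} Now take $\varepsilon=|a|+Ky+C(\delta/y+y^2)$. Each window $W_\sigma$ lies within $|a|+Ky$ of the atom $\sigma$ of $\bbern$. Each window carries mass $\tfrac12$ up to error $C(\delta/y+y^2)$, and the complement carries mass at most that error. From this, the two defining inequalities of $L$ follow by checking the three ranges $x<-1$, $-1\le x<1$ and $x\ge1$ for the distribution functions. This yields $L(\mu,\bbern)\le |a|+O(y)+O(\delta/y)$.

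\emph{Main obstacle: the constants.} The main difficulty is getting the specific constants $3$ and $5$ in \eqref{eq:two}. This requires choosing $K$ and tracking the tail bound $c$ of $h_++h_-$ on $E$ carefully. The trade-off is that a larger $K$ makes $c$ smaller but enlarges the $Ky$ term. I would first try $K=2$ and use $\delta<y/5$ to simplify the $|F_\mu|$ lower bound. If the constant $3$ is not reached that way, I would shrink the windows to radius $y$ and accept a larger multiple of $\delta/y$, which the factor $5$ may accommodate.
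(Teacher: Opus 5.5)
Your outline follows the paper's proof in substance. Both read Poisson-kernel masses $\int h_\sigma\,d\mu$ off $\operatorname{Im}G_\mu(a\pm1+iy)$, convert them to window masses through $h_++h_-$, and then pass to the L\'evy distance. The differences are cosmetic: the paper gets $|F_\mu(z)|\ge y$ from $\operatorname{Im}F_\mu(z)\ge\operatorname{Im}z$ rather than your reduction to $\delta<y/5$, it translates to $a=0$, and it uses a coupling where you check distribution functions.

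The gap is that you stop at $L\le|a|+O(y)+O(\delta/y)$. The lemma is precisely the inequality with constants $3$ and $5$, and your last paragraph leaves open whether these are reached. There is also a false step. Windows of radius $Ky$ with $K=2$ or $3$ are not disjoint for all $y\le1/2$: for $K=2$ they touch at $y=1/2$, and for $K=3$ they overlap once $y>1/3$. In the latter case your bound $h_{-\sigma}\le y^2/(2-Ky)^2$ on $W_\sigma$ is no longer small. You could dispose of $y\ge1/3$ as trivial, since then $3y\ge1$, but the cleaner choice is $K=1$, as in the paper.

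With $W_\sigma=[a+\sigma-y,a+\sigma+y]$ and $E=\R\setminus(W_+\cup W_-)$, the constants come out with room to spare.
\begin{itemize}
\item On $E$ both kernels are $<\frac12$, and the one whose centre lies at distance $\ge1$ is $\le y^2$, so $h_++h_-\le\frac12+y^2$ there.
\item On $W_\sigma$ one has $h_{-\sigma}\le y^2$, so $h_++h_-\le1+y^2$ on the windows.
\item Summing your two Step~1 estimates gives $1-\delta/y\le1+y^2-\frac12\mu(E)$, hence $\mu(E)\le2\delta/y+2y^2$.
\item Then $\frac12-\frac{\delta}{2y}\le\mu(W_\sigma)+y^2+\frac12\mu(E)$ gives $\mu(W_\sigma)\ge\frac12-\frac{3\delta}{2y}-2y^2$.
\end{itemize}
Your three-range check of distribution functions goes through with $\varepsilon=|a|+\max\{y,\,2\delta/y+2y^2\}$. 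For $x<-1$ and $x\ge1$ the relevant half-lines lie in $E$. For $-1\le x<1$ you use $W_-\subset(-\infty,x+\varepsilon]$ and $(-\infty,x-\varepsilon]\cap W_+=\emptyset$.

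Since $2y^2\le y$, this gives $L(\mu,\bbern)\le|a|+y+2\delta/y$ when $\delta<y/5$. Together with your trivial case $\delta\ge y/5$, this proves the stated inequality. The direct check is also where you gain over the paper. Its coupling uses only the lower bounds on $\mu(W_\sigma)$, with $h_\sigma\le1$ on $E$, and so pays twice that deficit. You can instead use the separate, smaller bound on $\mu(E)$ for the outer ranges.
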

\begin{proof}
We first suppose that $a=0$.

\smallskip\noindent
\emph{Step 1: From window masses to the L\'evy distance.}
Let $U_\sigma=[\sigma-y,\sigma+y]$ for $\sigma=\pm1$.
These windows are disjoint because $y\le1/2$.
For every $\delta\ge0$, we claim that
\begin{equation}\label{eq:mass-to-levy}
 \mu(U_\sigma)\ge\frac12-\delta\quad(\sigma=\pm1)
 \quad\Longrightarrow\quad L(\mu,\bbern)\le\max\{y,2\delta\}.
\end{equation}
If $0\le\delta<1/2$, the assumed lower bounds on the window masses are positive.
For each sign, define the measure
\[
 \mu_\sigma=
 \frac{1/2-\delta}{\mu(U_\sigma)}\,\mu|_{U_\sigma},
\]
where $\mu|_{U_\sigma}$ is the restriction of measure $\mu$ to $U_\sigma$.
The factor $(1/2-\delta)/\mu(U_\sigma)$ is at most one. 
Since the windows are disjoint,
$\mu_0:=\mu-\mu_{-1}-\mu_1$ is a positive measure of mass $2\delta$.
The measure on $\R^2$ given by
\[
 \pi=\mu_{-1}\otimes\delta_{-1}
       +\mu_1\otimes\delta_1
       +\frac12\mu_0\otimes(\delta_{-1}+\delta_1)
\]
has first marginal $\mu$ and second marginal $\bbern$.
For the first two terms, the distance between the two coordinates is at most $y$.
Thus, if $(X,Y)$ has law $\pi$, then
\[
 \mathbb P(|X-Y|>y)\le\mu_0(\R)=2\delta.
\]
On the event $\{|X-Y|\le y\}$ we have $\{X\le x-y\}\subset\{Y\le x\}\subset\{X\le x+y\}$.
Consequently,
\[
\mu((-\infty, x-y])-2\delta
\le \bbern((-\infty, x])
\le \mu((-\infty,x+y])+2\delta\qquad(x\in\R).
\]
Taking $\varepsilon=\max\{y,2\delta\}$ and using monotonicity of $\mu$
proves \eqref{eq:mass-to-levy} in this case.
If $\delta\ge1/2$, the same conclusion follows from $L\le1\le2\delta$.

\smallskip\noindent
\emph{Step 2: From the reciprocal transform to weighted masses.}
$\operatorname{Im}F_\mu(z)=-\operatorname{Im}G_\mu(z)/|G_\mu(z)|^2\ge\operatorname{Im}z$, because $-\operatorname{Im}G_\mu(z)=\operatorname{Im}z\int|z-x|^{-2}\,\mu(dx)\ge\operatorname{Im}z\,|G_\mu(z)|^2$ by the Cauchy--Schwarz inequality.
Put
\[
 E=\max_{\sigma=\pm1}|F_\mu(\sigma+iy)-2iy|,
 \qquad K_\sigma(x)=\frac{y^2}{(x-\sigma)^2+y^2}.
\]
For $z=\sigma+iy$, taking imaginary parts in the definition of $G_\mu$ gives
\[
 \int K_\sigma\,d\mu=-y\operatorname{Im}G_\mu(z),
 \qquad -y\operatorname{Im}\frac1{2iy}=\frac12.
\]
Also,
\[
 \frac1{F_\mu(z)}-\frac1{2iy}
 =\frac{2iy-F_\mu(z)}{2iyF_\mu(z)}.
\]
Thus, using $|\operatorname{Im}w|\le|w|$ and then $|F_\mu(z)|\ge y$, we obtain
\begin{equation}\label{eq:inverse-to-mass}
 \begin{aligned}
 \left|\int K_\sigma\, \mu(dx)-\frac12\right|
 &=y\left|\operatorname{Im}
       \left(\frac1{F_\mu(z)}-\frac1{2iy}\right)\right|\\
 &\le y\frac{|F_\mu(z)-2iy|}{2y|F_\mu(z)|}
 \le\frac{E}{2y}.
 \end{aligned}
\end{equation}
In particular, each of the two weighted masses is at least $1/2-E/(2y)$.

\smallskip\noindent
\emph{Step 3: From weighted masses to window masses.}
For every $x\in\R$, at least one of $|x-1|$ and $|x+1|$ is at least one.
Indeed, their sum is at least $|1-(-1)|=2$.
The corresponding kernel is at most $y^2/(1+y^2)\le y^2$,
and the other kernel is at most one. Hence
\[
 K_-+K_+\le1+y^2.
\]
Outside $U_-\cup U_+$, both distances exceed $y$, so both kernels are at most $1/2$.
Combining this with the bound $y^2$ for at least one kernel gives
\[
 K_-+K_+\le\frac12+y^2\quad\text{on }(U_-\cup U_+)^c.
\]
Let $q=\mu((U_-\cup U_+)^c)$. By summing the two lower bounds in
\eqref{eq:inverse-to-mass} and then splitting the integral into the windows and their complement,
\begin{align*}
 1-\frac Ey
 &\le\int(K_-+K_+)\,\mu(dx)\\
 &\le(1-q)(1+y^2)+q(1/2+y^2)
 =1+y^2-\frac q2.
\end{align*}
Rearranging yields $q\le2E/y+2y^2$.

Fix one sign $\sigma$. On the opposite window $U_{-\sigma}$,
\[
 |x-\sigma|\ge|\sigma-(-\sigma)|-|x+\sigma|
 \ge2-y\ge1,
\]
so $K_\sigma(x)\le y^2$. On its own window and on the complement of both windows,
we use $K_\sigma\le1$. Therefore
\begin{align*}
 \frac12-\frac{E}{2y}
 &\le\int K_\sigma\,d\mu\\
 &\le\mu(U_\sigma)+y^2\mu(U_{-\sigma})+q
 \le\mu(U_\sigma)+y^2+q.
\end{align*}
Substituting the bound for $q$, we get
\[
 \mu(U_\sigma)
 \ge\frac12-\frac{E}{2y}-y^2-\left(\frac{2E}{y}+2y^2\right)
 =\frac12-\frac{5E}{2y}-3y^2.
\]
Apply \eqref{eq:mass-to-levy} with $\delta=5E/(2y)+3y^2$.
Since $y\le1/2$, we have $6y^2\le3y$, and hence
\[
 L(\mu,\bbern)\le\max\{y,5E/y+6y^2\}
 \le3y+5E/y.
\]
The argument in Step 1 applies to every $\delta\ge0$, so no smallness condition on $E$ is needed.

\smallskip\noindent
\emph{Step 4: Translation.}
For general $a$, let $\widetilde\mu$ be the image of $\mu$ under $x\mapsto x-a$.
Then
\[
 G_{\widetilde\mu}(z)
 =\int_\R\frac{\mu(dx)}{z-(x-a)}
 =G_\mu(z+a),
 \qquad F_{\widetilde\mu}(z)=F_\mu(z+a).
\]
Since $\widetilde\mu((-\infty,x])=\mu((-\infty,x+a])$, we have $L(\mu,\widetilde\mu)\le|a|$.

The case $a=0$, applied to $\widetilde\mu$, and the triangle inequality now give
\begin{align*}
 L(\mu,\bbern)
 &\le L(\mu,\widetilde\mu)+L(\widetilde\mu,\bbern)\\
 &\le |a|+3y+
 \frac5y\max_{\sigma=\pm1}|F_\mu(a+\sigma+iy)-2iy|.
\end{align*}
This proves \eqref{eq:two}.
\end{proof}


\section{Upper bounds under moment assumptions}\label{sec:upper}
\begin{theorem}\label{thm:r-upper}
Let $r>2$, and suppose that $m_1(\mu)=0$, $m_2(\mu)=1$ and
\mbox{$\beta_r(\mu)<\infty$}. If $2<r\le5$, then
\begin{equation}\label{eq:r-upper}
 L(\mu_n,\bbern)\le C\beta_r(\mu)^{1/3}n^{-(r-2)/6}
 \qquad(n\ge1).
\end{equation}
The constant $C$ does not depend on $\mu,n,r$.
For $r\ge5$,
\begin{equation}\label{eq:r-large}
 L(\mu_n,\bbern)\le C\beta_r(\mu)^{1/(r-2)}n^{-1/2}.
\end{equation}
Moreover, for each fixed $\mu$, if $2<r<5$, then
\begin{equation}\label{eq:r-littleo}
 L(\mu_n,\bbern)=o\bigl(n^{-(r-2)/6}\bigr).
\end{equation}
\end{theorem}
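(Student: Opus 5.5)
Apply Lemma~\ref{lem:two} to $\mu_n$ with a suitable real shift $a$ and height $y$. First I need a formula for $F_{\mu_n}$. By the dilation and Boolean convolution formulas,
\[
 F_{\mu_n}(z)=\sqrt n\,F_{\mu^{\uplus n}}(\sqrt n z/\sqrt n\cdot\sqrt n)/\ldots
\]
More precisely, $F_{\mu_n}(z)=n^{-1/2}\bigl(nF_\mu(\sqrt n z)-(n-1)\sqrt n z\bigr)$. Writing $F_\mu(w)=w-\Phi(w)$, where $\Phi(w)=w-F_\mu(w)$ is the self-energy, this becomes
\[
 F_{\mu_n}(z)=z-\sqrt n\,\Phi(\sqrt n z).
\]
Since $m_1=0$ and $m_2=1$, we have $\Phi(w)=G_\mu(w)^{-1}\bigl(wG_\mu(w)-1\bigr)\cdot(\ldots)$, and to leading order $\Phi(w)\approx 1/w+m_3/w^2+\ldots$. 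Then $\sqrt n\,\Phi(\sqrt n z)\approx 1/z+m_3/(\sqrt n z^2)$, and $z-1/z$ vanishes at $z=\pm1$, which is how the Bernoulli limit appears.

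The key step is an error estimate: for $|w|\ge\sqrt n/2$ with $\operatorname{Im}w=\sqrt n y$,
\[
 \bigl|\Phi(w)-1/w\bigr|\lesssim \beta_r|w|^{-(r-1)}\ \text{(truncated appropriately)}\;+\;\text{tail terms}.
\]
I would obtain this by writing $G_\mu(w)=1/w+1/w^3+R(w)$ with $R(w)=w^{-3}\int x^3/(w-x)\,d\mu$, and splitting the integral at $|x|\le|w|/2$ and $|x|>|w|/2$. On the tail, use $|w-x|\ge \operatorname{Im}w=\sqrt n y$. For $2<r\le3$ I would instead expand only to second order and bound $\int x^2\,|x|/|w-x|$ by interpolation, giving a remainder of order $\beta_r |w|^{-r}\cdot(|w|/\operatorname{Im} w)$. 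The loss factor $1/y$ coming from the tail, where $x$ is near $w$, is the crux.

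After scaling this yields
\[
 |F_{\mu_n}(\sigma+iy)-2iy|\le |(\sigma+iy)-(\sigma+iy)^{-1}-2iy|+\text{err},
\]
where the first term is $O(y^2)$ and $\text{err}\lesssim \beta_r n^{-(r-2)/2}y^{-1}$ for $r\le3$. For $3<r$ the $m_3$ term $m_3/(\sqrt n z^2)$ is real-ish at $z=\pm1$, so it is only a shift. This is exactly where the parameter $a$ is used: take $a\approx m_3/(2\sqrt n)$ to absorb it. Then $|a|\le\beta_3/\sqrt n$, and the remaining error is of order $\beta_r n^{-(r-2)/2}y^{-1}$ plus the $m_4/n$ terms. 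Lemma~\ref{lem:two} then gives
\[
 L\lesssim |a|+y+y^{-2}\bigl(y^2+\beta_r n^{-(r-2)/2}y^{-1}\bigr).
\]
The $y^2/y$ part gives only $O(y)$. Optimizing $y\approx (\beta_r n^{-(r-2)/2})^{1/3}$ yields $\beta_r^{1/3}n^{-(r-2)/6}$. For $r<5$ this dominates $|a|\lesssim n^{-1/2}$, and $\beta_3^{1/3}$ is controlled by $\beta_r$ through Lyapunov, using $\beta_2=1$. For $r\ge5$ I would reduce to $r=5$, using $\beta_5\le\beta_r^{3/(r-2)}$; this gives $\beta_r^{1/(r-2)}n^{-1/2}$. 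The restriction $y\le1/2$ is handled by noting that $L\le1$ trivially when the optimizer exceeds $1/2$.

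The main obstacle is getting the remainder uniform in $n\ge1$ with only absolute constants, in particular near the real axis where $|w-x|$ is small. I would need to verify $|F_{\mu_n}|$ stays controlled, or avoid dividing by $G$ altogether by working with $\Phi=w-1/G$ via $1/G=w(1-\epsilon)^{-1}$ with $|\epsilon|\le 1/2$. When $|\epsilon|\le 1/2$ fails, the claimed bound exceeds a constant and is trivial.

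For the little-$o$ statement with fixed $\mu$ and $r<5$, I would split $\mu$'s moment integral at a level $|x|\le \eta\sqrt n$: the tail part has $\int_{|x|>\eta\sqrt n}|x|^r\,d\mu\to0$, while the bounded part contributes with a higher moment order $5$, of size $\lesssim(\eta\sqrt n)^{5-r}\beta_r$, giving error $n^{-3/2}(\eta\sqrt n)^{5-r}$. Choosing $\eta\to0$ slowly makes the total $o(n^{-(r-2)/2})$ inside the cube root, hence $o(n^{-(r-2)/6})$.
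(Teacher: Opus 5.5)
For \eqref{eq:r-upper} and \eqref{eq:r-large} your route is essentially the paper's. You use Lemma~\ref{lem:two} with a shift $a$ absorbing the third-moment term, and your self-energy identity for $F_{\mu_n}$, which is equivalent to the paper's $z-S_n(z)/(1+S_n(z)/(nz))$. You split at $|x|\asymp\sqrt n$ with the $1/y$ loss confined to the tail, take $y\asymp\beta_r^{1/3}n^{-(r-2)/6}$, and reduce $r\ge5$ to $r=5$ by H\"older. The paper packages this as the moment-free bound \eqref{eq:tail-bound}, with a \emph{truncated} third moment in $a$, which avoids your case split at $r=3$.

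A few slips need repair:
\begin{itemize}
\item In your display from Lemma~\ref{lem:two} the prefactor is $y^{-1}$, not $y^{-2}$.
\item For the shift, what you need is $\beta_3n^{-1/2}\le(\beta_rn^{-(r-2)/2})^{1/(r-2)}=h^{3/(r-2)}\le h$, where $h=\beta_r^{1/3}n^{-(r-2)/6}<1$. A bound $\beta_3\le C\beta_r^{1/3}$ is false for $r<5$, so Lyapunov alone does not do it.
\item The $O(1/n)$ terms you mention must actually be carried in $E$. For $4\le r\le5$ they cost $(m_4+1)/(nh)\le2h^{6/(r-2)-1}\le2h$.
\end{itemize}

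The little-$o$ part has a genuine gap for $4\le r<5$. You claim that, with a sliding cutoff and a fifth-order expansion, the total error ``inside the cube root'' is $o(n^{-(r-2)/2})$. But $F_{\mu_n}(z)$ contains the term $-(m_4-1)/(nz^3)$. It comes from the fourth-moment term of $S_n$ combined with the first correction from the denominator $1+S_n/(nz)$. Near $z=\sigma$ this term is about $-\sigma(m_4-1)/n$. It has opposite signs at $\sigma=\pm1$, so no common shift $a$ removes it. Moreover $m_4>1$ unless $\mu=\bbern$, and $1/n$ is not $o(n^{-(r-2)/2})$ once $r\ge4$. Treated as a cube-root error, this term alone only gives $L=O(n^{-1/3})$, and expanding the central part to higher order does not touch it.

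What is missing is that errors not produced by the tail enter $E$ without the factor $1/y$. They therefore cost $E/y$ and must be balanced by a square root, not a cube root. Write $c_n$ for the non-tail part of $E$ and $t_n=\int_{|x|>\sqrt n/4}x^2\,\mu(dx)$. Then $L\lesssim y+c_n/y+t_n/y^2$, and the choice $y=c_n^{1/2}+t_n^{1/3}$ gives $L\lesssim c_n^{1/2}+t_n^{1/3}$. This is precisely \eqref{eq:tail-bound}.

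With it, the little-$o$ is immediate at the fixed cutoff $\sqrt n/4$. On one side, $n^{(r-2)/2}B_n\le4^{r-2}\int_{|x|>\sqrt n/4}|x|^r\,\mu(dx)\to0$. On the other, $A_n^{1/2}=O(n^{-\min\{(r-2)/4,1/2\}})=o(n^{-(r-2)/6})$ for $r<5$. Neither the sliding cutoff nor the fifth-order expansion is needed. If you keep the sliding cutoff, note that its tail bound carries the growing factor $\eta^{2-r}$. So ``$\eta\to0$ slowly'' must be quantified against the decay of $\int_{|x|>\eta\sqrt n}|x|^r\,\mu(dx)$.
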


\begin{proof}
In the first part of the proof we use only $m_1(\mu)=0$ and $m_2(\mu)=1$.
\[
 A_n=\frac{1+\displaystyle\int_{|x|\le\sqrt n/4}x^4\,\mu(dx)}n,
 \qquad B_n=\int_{|x|>\sqrt n/4}x^2\,\mu(dx).
\]
These quantities are finite. Indeed, $B_n\le1$, and
$x^4\le(n/16)x^2$ on $\{|x|\le\sqrt n/4\}$.
We prove the intermediate bound
\begin{equation}\label{eq:tail-bound}
 L(\mu_n,\bbern)\le C\bigl(A_n^{1/2}+B_n^{1/3}\bigr).
\end{equation}
Let $y=A_n^{1/2}+B_n^{1/3}$. Since $A_n\ge n^{-1}$, we have $y>0$.
If $y\ge1/8$, then $L\le1\le8y$ gives the required bound.
We may therefore assume that $0<y<1/8$. The definition of $y$ gives
\[
 n^{-1}\le A_n\le y^2,\qquad B_n\le y^3.
\]

Let
\[
 a=\frac1{2\sqrt n}\int_{|x|\le\sqrt n/4}x^3\,\mu(dx),
 \qquad z=\sigma+a+iy\quad(\sigma=\pm1).
\]
Only a truncated third moment is used here. By the Cauchy--Schwarz inequality,
\begin{align*}
 |a|^2
 &\le\frac1{4n}
   \left(\int_{|x|\le\sqrt n/4}x^2\,\mu(dx)\right)
   \left(\int_{|x|\le\sqrt n/4}x^4\,\mu(dx)\right)\\
 &\le\frac1{4n}\int_{|x|\le\sqrt n/4}x^4\,\mu(dx)
 \le\frac{A_n}{4}\le\frac{y^2}{4}.
\end{align*}
Thus $|a|\le y/2<1/16$. In particular,
\[
 |\operatorname{Re}z|=|\sigma+a|\ge1-|a|>\frac{15}{16}>\frac34,
 \qquad
 |z|\le1+|a|+y\le1+\frac32y<\frac{19}{16}<\frac32.
\]
Since $|z|\ge|\operatorname{Re}z|$, these bounds imply
\[
 |a|\le y/2,\qquad \frac12\le|z|\le\frac32,
 \qquad |\operatorname{Re}z|\ge\frac34.
\]
We will also use
\[
 |z-\sigma|\le|a|+y\le\frac32y,
 \qquad |z+\sigma|\le|z|+1\le\frac52.
\]

Define the weighted transform
\[
 S_n(z)=\int_\R\frac{x^2}{z-x/\sqrt n}\,\mu(dx).
\]
This integral converges absolutely. For real $x$,
$|z-x/\sqrt n|\ge\operatorname{Im}z=y$, so
\[
 \int_\R\left|\frac{x^2}{z-x/\sqrt n}\right|\,\mu(dx)
 \le\frac1y\int_\R x^2\,\mu(dx)=\frac1y.
\]
To relate this transform to $F_{\mu_n}$, put $w=\sqrt n z$ and use the identity
\[
 \frac{w}{w-x}=1+\frac{x}{w}+\frac{x^2}{w(w-x)}.
\]
All three terms on the right are integrable. The linear term is integrable
because $\int|x|\,\mu(dx)\le m_2(\mu)^{1/2}=1$, and the last term is
controlled by the preceding bound for $S_n(z)$.
Integrating and using $m_1(\mu)=0$, we obtain
\begin{align*}
 \sqrt n\,zG_\mu(\sqrt n z)
 &=1+\frac1{\sqrt n z}\int_\R x\,\mu(dx)
   +\int_\R\frac{x^2}{\sqrt n z(\sqrt n z-x)}\,\mu(dx)\\
 &=1+\frac1{nz}\int_\R\frac{x^2}{z-x/\sqrt n}\,\mu(dx)
 =1+\frac{S_n(z)}{nz}.
\end{align*}
The left side is nonzero, since $z\ne0$ and the imaginary part of
$G_\mu(\sqrt n z)$ is strictly negative.
Thus the denominator $1+S_n(z)/(nz)$ is nonzero.

Applying the transformation rules recalled in Section~\ref{sec:two},
with $c=1/\sqrt n$, and then using the identity above gives
\begin{equation}\label{eq:F-exact}
 \begin{aligned}
 F_{\mu_n}(z)
 &=\sqrt n F_\mu(\sqrt n z)-(n-1)z\\
 &=\frac{nz}{1+S_n(z)/(nz)}-(n-1)z
 =z-\frac{S_n(z)}{1+S_n(z)/(nz)}.
 \end{aligned}
\end{equation}
For the last equality, subtract $z$ and note that the remaining numerator is
$-nz\{S_n(z)/(nz)\}=-S_n(z)$.

We next estimate $S_n(z)$. Apply
\[
 \frac1{z-u}=\frac1z+\frac{u}{z^2}
                 +\frac{u^2}{z^2(z-u)}
\]
only on $\{|x|\le\sqrt n/4\}$, with $u=x/\sqrt n$.
The first two terms of the central integral are
\[
 \frac1z\int_{|x|\le\sqrt n/4}x^2\,\mu(dx)
 =\frac{1-B_n}{z},
 \qquad
 \frac1{\sqrt n z^2}\int_{|x|\le\sqrt n/4}x^3\,\mu(dx)
 =\frac{2a}{z^2}.
\]
Adding the tail integral and using
$B_n/z=\int_{|x|>\sqrt n/4}x^2/z\,\mu(dx)$, we obtain
\begin{align*}
 S_n(z)-\frac1z-\frac{2a}{z^2}
 ={}&\frac1{nz^2}\int_{|x|\le\sqrt n/4}
       \frac{x^4}{z-x/\sqrt n}\,\mu(dx)\\
 &+\int_{|x|>\sqrt n/4}x^2
       \left(\frac1{z-x/\sqrt n}-\frac1z\right)\mu(dx).
\end{align*}
In the central part,
\[
 |z-x/\sqrt n|\ge|\operatorname{Re}z|-|x|/\sqrt n
 \ge\frac34-\frac14=\frac12.
\]
Since $|z|^{-2}\le4$, the absolute value of its remainder is at most
\[
 \frac8n\int_{|x|\le\sqrt n/4}x^4\,\mu(dx)\le8A_n.
\]
In the tail, we use $|z-x/\sqrt n|\ge y$ and $|z|^{-1}\le2$.
Because $y<1/8<1/2$, we have
\[
 \left|\frac1{z-x/\sqrt n}-\frac1z\right|
 \le\frac1y+2\le\frac2y.
\]
The absolute value of the tail remainder is therefore at most $2B_n/y$.
Combining both bounds gives
\[
 \left|S_n(z)-\frac1z-\frac{2a}{z^2}\right|
 \le8A_n+\frac{2B_n}{y}\le8y^2+2y^2=10y^2.
\]

It follows that
\[
 |S_n(z)|\le\frac1{|z|}+\frac{2|a|}{|z|^2}+10y^2
 \le2+4y+10y^2
 <2+\frac12+\frac{10}{64}<3.
\]
Since $n^{-1}\le y^2$ and $|z|\ge1/2$, we also have
\[
 \left|\frac{S_n(z)}{nz}\right|\le\frac6n\le6y^2<\frac12,
 \qquad
 \left|1+\frac{S_n(z)}{nz}\right|
 \ge1-\left|\frac{S_n(z)}{nz}\right|>\frac12.
\]
Thus replacing the denominator in \eqref{eq:F-exact} by 1 gives an error bounded by
\begin{align*}
 \left|\frac{S_n(z)}{1+S_n(z)/(nz)}-S_n(z)\right|
 &=\frac{|S_n(z)|^2}
 {n|z|\,|1+S_n(z)/(nz)|}\\
 &\le\frac9{n(1/2)(1/2)}
 =\frac{36}{n}\le36y^2.
\end{align*}
Combining this with the remainder bound $10y^2$, we find
\[
 \left|F_{\mu_n}(z)-\left(z-\frac1z-\frac{2a}{z^2}\right)\right|
 \le46y^2.
\]
In particular,
\begin{equation}\label{eq:normalized-F}
 F_{\mu_n}(z)=z-\frac1z-\frac{2a}{z^2}+O(y^2),
\end{equation}
with an absolute constant in the error term.

We now use the choice of $a$ and $z$ to cancel the linear correction.
Since $\sigma^2=1$, direct expansion gives
\[
 z-\frac1z=2(z-\sigma)-\frac{(z-\sigma)^2}{z}.
\]
Using $2(z-\sigma)=2a+2iy$, we obtain the exact identity
\[
 z-\frac1z-\frac{2a}{z^2}-2iy
 =-\frac{(z-\sigma)^2}{z}
   +2a\left(1-\frac1{z^2}\right).
\]
The first term on the right has absolute value at most
\[
 \frac{|z-\sigma|^2}{|z|}
 \le2\left(\frac32y\right)^2=\frac92y^2.
\]
For the second term, factor $z^2-1=(z-\sigma)(z+\sigma)$ and use the bounds above:
\[
 2|a|\left|1-\frac1{z^2}\right|
 =\frac{2|a|\,|z-\sigma|\,|z+\sigma|}{|z|^2}
 \le y\cdot\frac32y\cdot\frac52\cdot4=15y^2.
\]
Hence the absolute value of the full expression is at most $39y^2/2$.
Together with the error $46y^2$, this shows that
\[
 |F_{\mu_n}(a+\sigma+iy)-2iy|
 \le\left(46+\frac{39}{2}\right)y^2\le66y^2
 \qquad(\sigma=\pm1).
\]
Lemma~\ref{lem:two} now gives
\[
 L(\mu_n,\bbern)
 \le|a|+3y+\frac5y\,66y^2
 \le\left(\frac12+3+330\right)y\le334y.
\]
This proves \eqref{eq:tail-bound} with $C=334$ (the case $y\ge1/8$ was treated at the beginning).

We now assume $2<r\le5$ and $\beta_r(\mu)<\infty$.
H\"older's inequality with conjugate exponents $r/2$ and $r/(r-2)$ gives
\[
 1=\int_\R|x|^2\,\mu(dx)
 \le\left(\int_\R|x|^r\,\mu(dx)\right)^{2/r}
       \left(\int_\R1\,\mu(dx)\right)^{(r-2)/r}
 =\beta_r(\mu)^{2/r}.
\]
Thus $\beta_r(\mu)\ge1$.

Let
\[
 h=\beta_r(\mu)^{1/3}n^{-(r-2)/6},
 \qquad h^3=\beta_r(\mu)n^{-(r-2)/2}.
\]
If $h\ge1$, the bound $L\le1\le h$ is enough.
We therefore consider $0<h<1$.
On $\{|x|>\sqrt n/4\}$, we have
\[
 |x|^2=|x|^r|x|^{-(r-2)}
 \le4^{r-2}n^{-(r-2)/2}|x|^r.
\]
Consequently,
\[
 B_n\le4^{r-2}\beta_r(\mu)n^{-(r-2)/2}
 =4^{r-2}h^3\le64h^3,
 \qquad B_n^{1/3}\le4h.
\]
The constant 64 is uniform in $r$, since $0<r-2\le3$.

For the central term, first suppose that $2<r\le4$.
On $\{|x|\le\sqrt n/4\}$, the nonnegative exponent $4-r$ gives
\[
 |x|^4\le(\sqrt n/4)^{4-r}|x|^r.
\]
It follows that
\[
 \frac1n\int_{|x|\le\sqrt n/4}x^4\,\mu(dx)
 \le4^{r-4}\beta_r(\mu)n^{-(r-2)/2}
 \le h^3.
\]
Here $4^{r-4}\le1$. Also, $0<(r-2)/2\le1$ and $n\ge1$, so
\[
 n^{-1}\le n^{-(r-2)/2}
 \le\beta_r(\mu)n^{-(r-2)/2}=h^3.
\]
Therefore
\[
 A_n\le2h^3,
 \qquad A_n^{1/2}\le\sqrt2\,h^{3/2}\le\sqrt2\,h.
\]
The last step uses $0<h<1$.

Next, suppose that $4<r\le5$.
The numbers $(r-4)/(r-2)$ and $2/(r-2)$ are positive and sum to 1, and
\[
 2\frac{r-4}{r-2}+r\frac2{r-2}=4.
\]
H\"older's inequality with conjugate exponents
$(r-2)/(r-4)$ and $(r-2)/2$ therefore gives
\begin{align*}
 m_4(\mu)
 &=\int_\R
   (|x|^2)^{(r-4)/(r-2)}(|x|^r)^{2/(r-2)}\,\mu(dx)\\
 &\le m_2(\mu)^{(r-4)/(r-2)}\beta_r(\mu)^{2/(r-2)}
 =\beta_r(\mu)^{2/(r-2)}.
\end{align*}
At $r=4$, the same bound is simply the identity $m_4(\mu)=\beta_4(\mu)$,
so no limiting form of H\"older's inequality is needed.
Thus, for all $4\le r\le5$, using $\beta_r(\mu)\ge1$, we obtain
\begin{align*}
 A_n^{1/2}
 &\le\left(\frac{1+m_4(\mu)}n\right)^{1/2}
 \le\sqrt2\,\beta_r(\mu)^{1/(r-2)}n^{-1/2}\\
 &=\sqrt2\,h^{3/(r-2)}\le\sqrt2\,h.
\end{align*}
The equality follows by raising the definition of $h$ to the power $3/(r-2)$.
The last inequality holds because $3/(r-2)\ge1$ and $0<h<1$.
At $r=5$, this exponent is exactly 1.
In both ranges of $r$, we have
\[
 A_n^{1/2}+B_n^{1/3}\le(\sqrt2+4)h.
\]
Combining this with \eqref{eq:tail-bound} proves \eqref{eq:r-upper}.
The constants remain independent of $r$.

For $r>5$, interpolate between the second and the $r$th absolute moments.
The numbers $(r-5)/(r-2)$ and $3/(r-2)$ are positive and sum to 1, and
\[
 2\frac{r-5}{r-2}+r\frac3{r-2}=5.
\]
H\"older's inequality with conjugate exponents
$(r-2)/(r-5)$ and $(r-2)/3$ gives
\begin{align*}
 \beta_5(\mu)
 &=\int_\R
  (|x|^2)^{(r-5)/(r-2)}(|x|^r)^{3/(r-2)}\,\mu(dx)\\
 &\le m_2(\mu)^{(r-5)/(r-2)}\beta_r(\mu)^{3/(r-2)}
 =\beta_r(\mu)^{3/(r-2)}.
\end{align*}
Applying the already proved bound at $r=5$, we obtain
\[
 L(\mu_n,\bbern)
 \le C\beta_5(\mu)^{1/3}n^{-1/2}
 \le C\beta_r(\mu)^{1/(r-2)}n^{-1/2}.
\]
This proves \eqref{eq:r-large} for $r>5$.
Its endpoint $r=5$ is already included in \eqref{eq:r-upper}.

Finally, fix $2<r<5$ and a measure $\mu$ satisfying the assumptions.
The tail estimate above, with $\int_{|x|>\sqrt n/4}|x|^r\,\mu(dx)$ in place of $\beta_r(\mu)$, gives
\[
 n^{(r-2)/2}B_n
 \le4^{r-2}\int_{|x|>\sqrt n/4}|x|^r\,\mu(dx)\longrightarrow0.
\]
The integral tends to zero because $|x|^r$ is integrable and the cutoff tends to infinity.
Taking cube roots gives
\[
 \frac{B_n^{1/3}}{n^{-(r-2)/6}}
 =\left(n^{(r-2)/2}B_n\right)^{1/3}\longrightarrow0.
\]
For $2<r\le4$, the central estimate above, which does not require $h<1$, gives
$A_n\le2\beta_r(\mu)n^{-(r-2)/2}$.
For $4\le r<5$, it gives $A_n\le(1+m_4(\mu))/n$.
Thus
\[
 A_n^{1/2}=
 \begin{cases}
 O_{\mu,r}(n^{-(r-2)/4}),&2<r\le4,\\
 O_{\mu,r}(n^{-1/2}),&4\le r<5.
 \end{cases}
\]
To compare these rates with the required one, divide by $n^{-(r-2)/6}$:
\[
 \frac{A_n^{1/2}}{n^{-(r-2)/6}}=
 \begin{cases}
 O_{\mu,r}(n^{-(r-2)/12}),&2<r\le4,\\
 O_{\mu,r}(n^{-(5-r)/6}),&4\le r<5.
 \end{cases}
\]
Both exponents are strictly negative in their stated ranges, so both ratios tend to zero.
The two estimates agree at $r=4$.
Therefore both $A_n^{1/2}$ and $B_n^{1/3}$ are $o(n^{-(r-2)/6})$.
The intermediate bound \eqref{eq:tail-bound} now proves \eqref{eq:r-littleo}.
\end{proof}


\section{Optimality of the convergence exponent}\label{sec:sharpness}

By Maassen's finite variance representation \cite[Proposition~2.2]{Maassen92},
for every probability measure $\rho$, there is a unique probability measure
$\Phi(\rho)$ with mean 0 and variance 1 such that
\begin{equation}\label{eq:Phi}
 F_{\Phi(\rho)}(z)=z-G_\rho(z).
\end{equation}
Let $K_\mu(z)=z-F_\mu(z)$ and $t=\sqrt n$.
The transformation rules in Section~\ref{sec:two} and \eqref{eq:Phi} give
\[
 K_{\mu_n}(z)=tK_\mu(tz)=tG_\rho(tz)
 =G_{D_{1/t}\rho}(z)\qquad\text{if }\mu=\Phi(\rho).
\]
Thus
\begin{equation}\label{eq:conjugacy}
 \mu=\Phi(\rho)\quad\Longrightarrow\quad
 \mu_n=\Phi(D_{1/\sqrt n}\rho).
\end{equation}

We construct $\rho$ from atoms that are far apart.

\begin{lemma}\label{lem:far}
Suppose that $b_1\ge8$, $b_{k+1}\ge4b_k$, $p_k>0$, and
$q=1-\sum_{k\ge1}p_k>0$. Let
\[
 \rho=q\delta_0+\sum_{k\ge1}p_k\delta_{b_k},\qquad \mu=\Phi(\rho).
\]
Then $\mu$ is purely atomic and has exactly one atom $\xi_k$ in each interval
$(b_k,b_{k+1})$. Its mass $w_k$ satisfies
\begin{equation}\label{eq:far}
 \frac{p_k}{2b_k}\le\xi_k-b_k\le\frac{2p_k}{b_k},\qquad
 \frac{p_k}{5b_k^2}\le w_k\le\frac{4p_k}{b_k^2}.
\end{equation}
The only other atoms of $\mu$ are one point in $(-1,0)$ and one point in $(0,2)$.
Thus, for every $u>0$,
\begin{equation}\label{eq:moment-series}
 \beta_u(\mu)<\infty
 \quad\Longleftrightarrow\quad
 \sum_{k\ge1}p_kb_k^{u-2}<\infty.
\end{equation}
\end{lemma}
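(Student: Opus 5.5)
We work directly with the defining relation \eqref{eq:Phi}: $G_\mu(z)=1/(z-G_\rho(z))$. Since $\rho$ is purely atomic,
\[
 f(x):=x-G_\rho(x)=x-\frac qx-\sum_{k\ge1}\frac{p_k}{x-b_k}
\]
is a real meromorphic function on $\R\setminus(\{0\}\cup\{b_k\})$. Its derivative is $f'(x)=1+q/x^2+\sum p_k/(x-b_k)^2>0$, so $f$ is strictly increasing on each interval between consecutive poles. On each such interval it runs from $-\infty$ to $+\infty$; on the two unbounded intervals it runs from $-\infty$ to $+\infty$ through the poles' sides. Hence $f$ has exactly one zero in each of $(-\infty,0)$, $(0,b_1)$ and $(b_k,b_{k+1})$. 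The poles $b_k$ accumulate only at $+\infty$.

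The key structural step is the following. Since $G_\mu=1/(z-G_\rho)$ and $\operatorname{Im}G_\rho(x+i0)=0$ away from the atoms, the Stieltjes inversion formula shows that $\mu$ has no absolutely continuous or singular continuous part. More directly, $-\operatorname{Im}G_\mu(x+i\varepsilon)/\pi$ converges weakly to a sum of point masses at the zeros $\xi$ of $f$, with weights $1/f'(\xi)$. This is the standard Nevanlinna argument: $F_\mu=z-G_\rho$ extends to a real function off a discrete set, so $\mu$ is supported on its zero set. At the poles of $f$ the function $G_\mu$ vanishes, so they carry no mass. Hence $\mu$ is purely atomic, and its atoms are the zeros of $f$, each with mass $w=1/f'(\xi)$. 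To be safe, I would check that the total mass $\sum 1/f'(\xi)$ equals $1$, which is automatic since $\mu=\Phi(\rho)$ is a probability measure.

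Next come the quantitative bounds \eqref{eq:far}, which I expect to be the main obstacle because the tail sum must be controlled. On $(b_k,b_{k+1})$, write $x=b_k+s$. The zero equation is
\[
 \frac{p_k}{s}=x-\frac qx-\sum_{j\ne k}\frac{p_j}{x-b_j}.
\]
For $s\in[p_k/(2b_k),\,2p_k/b_k]$ we have $s\le 2/8$, so $x\approx b_k$. The terms $q/x$ and $\sum_{j<k}p_j/(x-b_j)$ are at most $1/(x-b_{k-1})\le 4/(3b_k)$, using $b_{k-1}\le b_k/4$. The terms with $j>k$ are at most $1/(b_{k+1}-x)\le 1/(3b_k)$. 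The right side is therefore in $[b_k/2,\,2b_k]$, using $b_k\ge8$. By monotonicity, evaluating $f$ at the two endpoints $s=p_k/(2b_k)$ and $s=2p_k/b_k$ gives opposite signs, which locates $\xi_k$. For the weight, $f'(\xi_k)=p_k/s^2+O(1)$ with $s\in[p_k/(2b_k),2p_k/b_k]$. This gives $f'(\xi_k)\in[b_k^2/(4p_k),\,4b_k^2/p_k+C]$. Absorbing the $O(1)$ term into the constant $5$, using $p_k<1$ and $b_k\ge8$, yields $p_k/(5b_k^2)\le w_k\le 4p_k/b_k^2$.

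The location of the other two atoms is a sign check. For the atom in $(-1,0)$, we have $f(-1)=-1+q+\sum p_k/(1+b_k)<0$ since $q+\sum p_k=1$ and the last terms are small. Also $f(0^-)=+\infty$. For the atom in $(0,2)$, we have $f(0^+)=-\infty$ and $f(2)=2-q/2+\sum p_k/(b_k-2)>0$. Finally, for \eqref{eq:moment-series}, the two bounded atoms contribute finitely. Since $\xi_k\asymp b_k$ and $w_k\asymp p_k/b_k^2$, we get $\beta_u(\mu)<\infty$ if and only if $\sum p_kb_k^{u-2}<\infty$.
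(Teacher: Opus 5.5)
Your proof is correct and follows essentially the same route as the paper. The shared ingredients are: the strictly increasing meromorphic function $F_\mu(x)=x-G_\rho(x)$ with one zero per gap; the support criterion via real analytic continuation of $G_\mu=1/F_\mu$ across $\R$, with atom masses $1/F_\mu'(\xi)$; the same test points $b_k+p_k/(2b_k)$, $b_k+2p_k/b_k$, $-1$ and $2$; and the same bound $F_\mu'(\xi_k)\le 2+4b_k^2/p_k\le 5b_k^2/p_k$. The only slips are cosmetic and change nothing: there is just one unbounded gap, $(-\infty,0)$; the $j>k$ terms are bounded by $1/(3b_k-1/4)$ rather than $1/(3b_k)$; and the $O(1)$ part of $f'(\xi_k)$ should be bounded explicitly by $1+16/(9b_k^2)\le 2$, which your distance estimates already give.
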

\begin{proof}
By \eqref{eq:Phi},
\[
 F_\mu(z)=z-\frac qz-\sum_{j\ge1}\frac{p_j}{z-b_j}.
\]
The Cauchy transform is analytic off the support of its measure
and real on the complementary real intervals
\cite[Lemma~2.1]{Hasebe10}.
Here $\operatorname{supp}\rho=\{0,b_1,b_2,\ldots\}$ has no finite accumulation point.

Consequently, $F_\mu$ is meromorphic with simple poles at these points,
and differentiation under the integral gives, for real $x$ away from the poles,
\[
 F_\mu'(x)=1+\frac q{x^2}+\sum_{j\ge1}\frac{p_j}{(x-b_j)^2}>0.
\]
Thus $F_\mu$ is strictly increasing on every gap between its real poles.

For each gap $(b_k,b_{k+1})$, the limits of $F_\mu$ at the left and right
endpoints are $-\infty$ and $+\infty$, respectively.
The same signs hold at the endpoints of $(0,b_1)$.
On $(-\infty,0)$, we have $F_\mu(x)\to-\infty$ as $x\to-\infty$
and $F_\mu(x)\to+\infty$ as $x\uparrow0$.
For the first limit, one may use $|G_\rho(x)|\le1/|x|$ for $x<0$.
It follows that each of these intervals contains exactly one zero,
and each zero is simple because $F_\mu'>0$.
Moreover,
\[
 F_\mu(-1)=-1+q+\sum_j\frac{p_j}{1+b_j}
 =-\sum_j\frac{p_jb_j}{1+b_j}<0,
\]
and
\[
 F_\mu(2)=2-\frac q2+\sum_j\frac{p_j}{b_j-2}>0.
\]
Thus the first two zeros lie in $(-1,0)$ and $(0,2)$.

At each pole of $F_\mu$, its reciprocal $G_\mu$ has a removable singularity.
Away from the zeros of $F_\mu$, it therefore extends analytically across the real axis
with real values. By the support criterion in \cite[Lemma~2.1]{Hasebe10},
$\mu$ is supported on these zeros, which have no finite accumulation point.
At each simple zero $\xi$, the atom formula
\cite[Lemma~2.4]{Hasebe10} gives
\[
 \mu(\{\xi\})=\lim_{y\downarrow0}iyG_\mu(\xi+iy)
 =\frac1{F_\mu'(\xi)}>0.
\]
This proves that $\mu$ is purely atomic with exactly the stated atoms.

Fix $k$, and let $b=b_k$ and $p=p_k$ for the next estimates.
For $x\in[b,b+1]$, define
\[
 H_k(x)=x-\frac qx-\sum_{j\ne k}\frac{p_j}{x-b_j}.
\]
If $j<k$, then $b_j\le b/4$, so $x-b_j\ge3b/4$.
If $j>k$, then $b_j\ge4b$, so $b_j-x\ge3b-1$.
The terms with $j>k$ are positive in $H_k(x)$, whereas the terms with
$j<k$ and the term $-q/x$ are negative. Thus
\begin{align*}
 H_k(x)
 &\ge b-\frac qb-\frac4{3b}\sum_{j<k}p_j
 \ge b-\frac4{3b},\\
 H_k(x)
 &\le b+1+\frac1{3b-1}\sum_{j>k}p_j
 \le b+1+\frac1{3b-1}.
\end{align*}
All distances from $x$ to the poles in $H_k$ are at least $3b/4$.
Since $q+\sum_{j\ne k}p_j=1-p\le1$, we also have
\[
 1\le H_k'(x)
 =1+\frac q{x^2}+\sum_{j\ne k}\frac{p_j}{(x-b_j)^2}
 \le1+\frac{16}{9b^2}<2.
\]
Since $b\ge8$, these bounds imply $b/2<H_k(x)<2b$.

The two points $x_-=b+p/(2b)$ and $x_+=b+2p/b$ belong to $[b,b+1]$,
because $p<1$ and $b\ge8$. At these points,
\[
 F_\mu(x_-)=H_k(x_-)-2b<0,
 \qquad
 F_\mu(x_+)=H_k(x_+)-\frac b2>0.
\]
Therefore $x_-<\xi_k<x_+$.
Writing $h=\xi_k-b$, we obtain
\[
 \frac{p}{2b}\le h\le\frac{2p}{b},\qquad
 F_\mu'(\xi_k)=H_k'(\xi_k)+\frac p{h^2}.
\]
It follows that
\[
 \frac{b^2}{4p}\le F_\mu'(\xi_k)
 \le2+\frac{4b^2}{p}\le\frac{5b^2}{p}.
\]
The last inequality uses $2\le b^2/p$.
Taking reciprocals proves the mass bounds in \eqref{eq:far}.
Also, $b_k\le\xi_k\le2b_k$. Hence, for every $u>0$,
\[
 \frac15p_kb_k^{u-2}
 \le w_k\xi_k^u
 \le2^{u+2}p_kb_k^{u-2}.
\]
The two remaining atoms make a finite contribution to every absolute moment.
Summing these inequalities proves \eqref{eq:moment-series}.
\end{proof}

\begin{lemma}\label{lem:detune}
Under the assumptions of Lemma~\ref{lem:far}, let $n_k$ be positive integers,
$t_k=\sqrt{n_k}$, and $d_k=p_k^{1/3}$.
Suppose that $d_k\to0$ and that, for all sufficiently large $k$,
\[
 \frac{b_j}{t_k}\le\frac14\quad(j<k),\qquad
 \frac{b_j}{t_k}\ge4\quad(j>k).
\]
Let
\begin{equation}\label{eq:background-size}
 E_k=\frac1{t_k}\sum_{j<k}p_jb_j+\sum_{j>k}p_j.
\end{equation}
Then
\begin{equation}\label{eq:criterion}
 \frac{b_k}{t_k}=1+d_k+o(d_k),\qquad E_k=o(d_k)
 \quad\Longrightarrow\quad
 L(\mu_{n_k},\bbern)\ge\frac{d_k}{8}
 \quad(k\text{ sufficiently large}).
\end{equation}
\end{lemma}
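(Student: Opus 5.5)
The plan is to work directly with the reciprocal Cauchy transform of $\mu_{n_k}$, which \eqref{eq:conjugacy} makes explicit. Then I would locate one specific atom of $\mu_{n_k}$ lying at distance about $d_k$ to the right of $1$ and carrying mass about $d_k/4$. Such an atom forces the Lévy distance to be at least $d_k/8$.

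\emph{Step 1: Explicit formula.} Write $t=t_k$, $d=d_k$, $p=p_k=d^3$ and $c=b_k/t$. By \eqref{eq:conjugacy}, $\mu_{n_k}=\Phi(D_{1/t}\rho)$, so
\[
 F_{\mu_{n_k}}(x)=x-\frac qx-\sum_{j<k}\frac{p_j}{x-b_j/t}-\frac{p}{x-c}-\sum_{j>k}\frac{p_j}{x-b_j/t}.
\]
Exactly as in the proof of Lemma~\ref{lem:far}, $\mu_{n_k}$ is purely atomic. Its atoms are the simple zeros $\xi$ of this increasing meromorphic function, with masses $1/F'_{\mu_{n_k}}(\xi)$.

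\emph{Step 2: Reduction to a model function.} I restrict to $x\in[3/4,5/4]$ (for large $k$, $c$ lies in this range). For $j<k$ we have $b_j/t\le1/4$, and I expand
\[
 \frac{p_j}{x-b_j/t}=\frac{p_j}{x}+O\Bigl(\frac{p_jb_j}{t}\Bigr).
\]
For $j>k$ we have $b_j/t\ge4$, so each term is $O(p_j)$. Using
\[
 q+\sum_{j<k}p_j=1-p-\sum_{j>k}p_j
\]
and $p=d^3=o(d)$, I get, uniformly on $[3/4,5/4]$,
\[
 F_{\mu_{n_k}}(x)=x-\frac1x-\frac p{x-c}+O(E_k)+O(d^3)=x-\frac1x-\frac p{x-c}+o(d).
\]
The same expansions, after differentiation, give
\[
 F'_{\mu_{n_k}}(x)=1+\frac1{x^2}+\frac p{(x-c)^2}+o(1).
\]

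\emph{Step 3: The displaced atom.} On $(c,c+1/4)$ the function $F_{\mu_{n_k}}$ increases from $-\infty$, so it has a unique zero $\xi=c+h$ there. Since $x-1/x=2(x-1)+O((x-1)^2)$ and $c-1=d+o(d)$, the zero equation reads
\[
 \frac ph=2d+o(d),\qquad\text{so}\qquad h=\frac{d^2}{2}(1+o(1)).
\]
I would make this rigorous as in Lemma~\ref{lem:far}, by evaluating $F_{\mu_{n_k}}$ at $c+(1\pm\eta)d^2/2$ for a fixed small $\eta>0$ and using monotonicity. Then
\[
 F'_{\mu_{n_k}}(\xi)=\frac p{h^2}+O(1)=\frac4d(1+o(1)),
\]
so the atom $\xi$ has mass $w=(d/4)(1+o(1))$, and $\xi\ge c>1+3d/4$ for large $k$.

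\emph{Step 4: The Lévy bound.} Take $x_0=1+d/2$ and any $\varepsilon<d/8$. Then $x_0+\varepsilon<1+3d/4<\xi$, so
\[
 \mu_{n_k}((-\infty,x_0+\varepsilon])+\varepsilon\le1-w+\varepsilon\le1-\frac d4+\frac d8+o(d)<1=\bbern((-\infty,x_0])
\]
for large $k$. This violates the defining inequality of $L$, hence $L(\mu_{n_k},\bbern)\ge d/8$.

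I expect the main obstacle to be Step 3. The zero must be localized precisely enough that the relative error in $F'(\xi)\approx p/h^2$ is $o(1)$. Since $h\sim d^2$ while the additive errors in $F$ are only $o(d)$, one has to check that an $o(d)$ perturbation of $F$ moves $p/h$ by only a relative $o(1)$. This holds because $p/h\approx2d$ dominates the error, but it requires the bracketing at $c+(1\pm\eta)d^2/2$ to be done carefully, with the $o(d)$ terms controlled uniformly in $x$.
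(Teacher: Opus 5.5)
Your proposal is correct and follows essentially the paper's route. You isolate the $k$th pole of $F_{\mu_{n_k}}$ via the conjugacy formula, compare the background with $x-1/x$ up to $O(E_k)+O(d_k^3)$ uniformly near $1$, bracket the unique zero at distance of order $d_k^2$ to the right of $b_k/t_k$, read off the atom mass as $1/F'_{\mu_{n_k}}$ there, and use this atom beyond $1$ to violate the L\'evy inequality.

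The only difference is in the bracketing. You take $c+(1\pm\eta)d^2/2$ with $\eta\to0$, which gives the sharper asymptotics $h\sim d^2/2$ and $w\sim d/4$. The paper instead uses the fixed brackets $a+\tfrac25 d^2$ and $a+\tfrac23 d^2$, together with $1\le H'\le 3$ and $d\le 1/8$, to get $w\ge d/8$ directly with explicit constants.
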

\begin{proof}
Write $a=b_k/t_k$, $d=d_k$, and $c_j=b_j/t_k$.
By \eqref{eq:conjugacy} and $p_k=d^3$,
\[
 F_{\mu_{n_k}}(z)=H(z)-\frac{d^3}{z-a},\qquad
 H(z)=z-\frac qz-\sum_{j\ne k}\frac{p_j}{z-c_j}.
\]
The measure $q\delta_0+\sum_{j\ne k}p_j\delta_{c_j}$ has support disjoint
from $[1,3/2]$. The same analyticity property used in Lemma~\ref{lem:far}
shows that $H$ is analytic in a complex neighborhood of this interval
and real on the interval.
Using $q+\sum_{j\ne k}p_j=1-p_k$, we have
\[
 H(x)-\left(x-\frac{1-p_k}{x}\right)
 =-\sum_{j\ne k}p_j\left(\frac1{x-c_j}-\frac1x\right).
\]
For $1\le x\le3/2$ and $j<k$, we have $0<c_j\le1/4$, and therefore
\[
 \left|\frac1{x-c_j}-\frac1x\right|
 =\frac{c_j}{x(x-c_j)}\le\frac43c_j.
\]
For $j>k$, we have $c_j\ge4$, so
\[
 \left|\frac1{x-c_j}-\frac1x\right|
 =\frac1{c_j-x}+\frac1x\le\frac25+1<2.
\]
It follows that the absolute value of the preceding sum is at most
\[
 \frac4{3t_k}\sum_{j<k}p_jb_j+2\sum_{j>k}p_j\le2E_k.
\]
The distances from $x$ to all poles of $H$ are at least $3/4$.
Since all coefficients are nonnegative and have total mass $1-p_k\le1$,
\[
 1\le H'(x)=1+\frac q{x^2}+\sum_{j\ne k}\frac{p_j}{(x-c_j)^2}
 \le1+\frac{16}{9}.
\]
We have proved
\begin{equation}\label{eq:background}
 \left|H(x)-\left(x-\frac{1-p_k}{x}\right)\right|\le2E_k,
 \qquad 1\le H'(x)\le1+\frac{16}{9}<3.
\end{equation}

Let $s=a-1$. The assumptions give $s=d+o(d)$ and $E_k=o(d)$.
In particular, $a\to1$ and $a\in[1,3/2]$ for all sufficiently large $k$.
Since $p_k=d^3$, the exact identity
\[
 a-\frac{1-p_k}{a}
 =a-\frac1a+\frac{d^3}{a}
 =2s-\frac{s^2}{a}+\frac{d^3}{a}
\]
shows that
\[
 H(a)=2s-\frac{s^2}{a}+\frac{d^3}{a}+O(E_k)=2d+o(d).
\]
Thus, for all sufficiently large $k$,
\[
 d\le\frac18,\qquad a-1\ge\frac d2,\qquad
 [a,a+d^2]\subset[1,3/2],\qquad
 \frac74d\le H(a)\le\frac94d.
\]
The derivative bounds in \eqref{eq:background} now give
\begin{align*}
 H\left(a+\frac25d^2\right)
 &\le H(a)+3\cdot\frac25d^2
 \le\frac94d+\frac65d^2<\frac52d,\\
 H\left(a+\frac23d^2\right)
 &\ge H(a)\ge\frac74d>\frac32d.
\end{align*}
Here the strict upper bound follows from $d\le1/8$, since
$9/4+(6/5)d\le12/5<5/2$.
At the two points above, the pole term $d^3/(x-a)$ equals $5d/2$
and $3d/2$, respectively. Hence
\[
 F_{\mu_{n_k}}\left(a+\frac25d^2\right)<0,
 \qquad
 F_{\mu_{n_k}}\left(a+\frac23d^2\right)>0.
\]
Also,
\[
 F_{\mu_{n_k}}'(x)=H'(x)+\frac{d^3}{(x-a)^2}>0
 \qquad(a<x\le a+d^2).
\]
It follows that there is exactly one zero $\lambda_k$ between these two points,
and that this zero is simple. The residue calculation from Lemma~\ref{lem:far}
therefore gives an atom of $\mu_{n_k}$ at $\lambda_k$, with mass
\begin{align*}
 w_k:=\mu_{n_k}(\{\lambda_k\})
 &=\left(H'(\lambda_k)+\frac{d^3}{(\lambda_k-a)^2}\right)^{-1}\\
 &\ge\left(3+\frac{25}{4d}\right)^{-1}
 =\frac{d}{3d+25/4}\ge\frac d8.
\end{align*}
Indeed, $\lambda_k-a>2d^2/5$ and $3d+25/4\le53/8<8$.
Here $F_{\mu_{n_k}}$ is analytic in a neighborhood of
$\lambda_k$ and has a simple zero there.
Hence $G_{\mu_{n_k}}=1/F_{\mu_{n_k}}$ has a simple pole
at $\lambda_k$, with residue
$1/F'_{\mu_{n_k}}(\lambda_k)$.
Also, $\lambda_k-1\ge a-1\ge d/2$.

For completeness, let $\mu$ be any probability measure with an atom of
mass $w$ at $v>1$.
If $0<\varepsilon<\min\{v-1,w\}$, then $1+\varepsilon<v$, and hence
\[
 \mu((-\infty,1+\varepsilon])\le1-w
 <1-\varepsilon=\bbern((-\infty,1])-\varepsilon.
\]
This violates the defining inequality for L\'evy distance at $x=1$.
Thus $L(\mu,\bbern)\ge\min\{v-1,w\}$.
Taking $v=\lambda_k$ gives
\[
 L(\mu_{n_k},\bbern)
 \ge\min\{\lambda_k-1,w_k\}
 \ge\min\{d/2,d/8\}=d/8,
\]
as required.
\end{proof}

\begin{theorem}[Optimality of the exponent for a fixed probability measure]\label{thm:family}
For a real number $r\ge2$, define
\begin{equation}\label{eq:family}
 \begin{gathered}
 T_k=8^k,\qquad p_k=\frac1{64k^2T_k^{r-2}},\qquad
 d_k=p_k^{1/3},\qquad b_k=T_k(1+d_k),\\
 \rho=\left(1-\sum_{k\ge1}p_k\right)\delta_0+
       \sum_{k\ge1}p_k\delta_{b_k},\qquad \mu=\Phi(\rho).
 \end{gathered}
\end{equation}
Then $\mu$ has mean 0 and variance 1, and for every $u>0$,
\begin{equation}\label{eq:exact-threshold}
 \beta_u(\mu)<\infty\quad\Longleftrightarrow\quad 0<u\le r.
\end{equation}
If $2\le r<5$, then along $n_k=T_k^2=64^k$, for all sufficiently large $k$,
\begin{equation}\label{eq:family-lower}
 L(\mu_{n_k},\bbern)\ge\frac{d_k}{8}
 =\frac1{32k^{2/3}}n_k^{-(r-2)/6}.
\end{equation}
Thus, if $2\le r<5$, this single probability measure satisfies
\begin{equation}\label{eq:sharp-exponent}
 \sup_{n\ge1}n^\gamma L(\mu_n,\bbern)=\infty
 \qquad\left(\gamma>\frac{r-2}{6}\right).
\end{equation}
\end{theorem}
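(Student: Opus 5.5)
The plan is to verify the hypotheses of Lemma~\ref{lem:far} and Lemma~\ref{lem:detune} for the explicit choice \eqref{eq:family}. Mean $0$ and variance $1$ come for free from Maassen's representation \eqref{eq:Phi}. For these lemmas we need three checks. First, $b_1=8(1+d_1)\ge8$. Second, $b_{k+1}/b_k=8(1+d_{k+1})/(1+d_k)\ge 8/(1+d_1)\ge4$, since $d_k\le(1/64)^{1/3}=1/4$. Third, $\sum p_k\le\frac1{64}\sum k^{-2}<1$, so $q>0$.

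For the moment threshold \eqref{eq:exact-threshold}, apply \eqref{eq:moment-series}. Since $b_k\asymp T_k$, we get
\[
p_kb_k^{u-2}\asymp k^{-2}\,8^{k(u-r)}.
\]
This sum converges for $u\le r$, using the factor $k^{-2}$ at $u=r$, and diverges for $u>r$ by geometric growth.

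For the lower bound \eqref{eq:family-lower} with $2\le r<5$, apply Lemma~\ref{lem:detune} with $n_k=T_k^2$, so $t_k=T_k$ and $d_k\to0$.

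1. \emph{Separation.} For $j<k$ we have $b_j/t_k\le 2T_{k-1}/T_k=1/4$, and for $j>k$ we have $b_j/t_k\ge T_{k+1}/T_k=8$.
2. \emph{Position of the atom.} We have $b_k/t_k=1+d_k$ exactly.
3. \emph{Background size.} It remains to show $E_k=o(d_k)$, and this is the main obstacle.

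For the first part of $E_k$, the sum is dominated by its last term, giving $\frac1{T_k}\sum_{j<k}p_jb_j\lesssim T_k^{-1}T_{k-1}^{3-r}$ up to polynomial factors in $k$. For the second part, $\sum_{j>k}p_j\lesssim k^{-2}T_{k+1}^{-(r-2)}$. Both are to be compared with
\[
d_k\asymp k^{-2/3}T_k^{-(r-2)/3}.
\]

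The tail term relative to $d_k$ is of order $8^{-(r-2)}\,T_k^{-2(r-2)/3}$ up to powers of $k$. For $r>2$ this tends to $0$ geometrically. At $r=2$ the ratio is instead of order $k^{-2}\cdot k\,/\,k^{-2/3}=k^{-1/3}\to0$, since $\sum_{j>k}j^{-2}\asymp 1/k$. So the tail term is $o(d_k)$ in all cases.

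The head term is where the restriction $r<5$ enters. Its ratio to $d_k$ is of order
\[
T_k^{-1+(r-2)/3}\,T_{k-1}^{\max(3-r,0)},
\]
with logarithmic factors in $k$ absorbed.
\begin{itemize}
\item If $r\le3$, the ratio is $8^{k(-1+(r-2)/3)}\,8^{(k-1)(3-r)}\approx 8^{k(2-2r/3)}\cdot 8^{r-3}$. The exponent $2-2r/3$ is $\le0$ for $r\ge3$, which needs care, so I will do this case honestly. For $r\ge3$ the summand $p_jb_j\asymp j^{-2}T_j^{3-r}$ is nonincreasing, so the head sum is $O(1)$ and the ratio is $O(k^{2/3}T_k^{(r-5)/3})\to0$. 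For $2\le r<3$ the head sum is $O(T_{k-1}^{3-r})$, and the ratio is $k^{O(1)}\,8^{k(3-r)-k+k(r-2)/3}=k^{O(1)}\,8^{-2k(r-2)/3}$. This tends to $0$ for $r>2$, while $r=2$ needs the polynomial factors checked.
\item At $r=2$ we have $d_k\asymp k^{-2/3}$, and $\frac1{T_k}\sum_{j<k}j^{-2}T_j$ is dominated by its last term, of order $k^{-2}/8$. So the head term is still $o(d_k)$.
\end{itemize}
Hence $E_k=o(d_k)$, and \eqref{eq:criterion} gives $L(\mu_{n_k},\bbern)\ge d_k/8$.

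The explicit formula in \eqref{eq:family-lower} follows from $d_k=(64k^2)^{-1/3}T_k^{-(r-2)/3}=\frac1{4k^{2/3}}\,n_k^{-(r-2)/6}$. Then \eqref{eq:sharp-exponent} follows because $n_k^\gamma L(\mu_{n_k},\bbern)\gtrsim k^{-2/3}64^{k(\gamma-(r-2)/6)}\to\infty$ whenever $\gamma>(r-2)/6$.
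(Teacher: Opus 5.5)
Your proposal is correct and follows the paper's proof. Both verify the hypotheses of Lemma~\ref{lem:far} to get the moment threshold and those of Lemma~\ref{lem:detune} along $n_k=T_k^2$, and in both the only real work is showing $E_k=o(d_k)$. The one difference is in that step: the paper treats $2<r<5$ uniformly through $E_k\le T_k^{-\theta}\sum_j p_jb_j^{\theta}$ with $\theta=\min\{r-2,1\}$, while you split into the cases $r\ge3$, $2<r<3$ and $r=2$ and bound geometric sums directly. That works, but note that your first, abandoned exponent $2-2r/3$ should be $(4-2r)/3=-2(r-2)/3$, which is the value your subsequent computation correctly gives.
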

\begin{proof}
Let $s=r-2\ge0$. Then
\[
 d_k=\frac1{4k^{2/3}T_k^{s/3}}\le\frac14,
 \qquad
 \sum_{k\ge1}p_k\le\frac1{64}\sum_{k\ge1}\frac1{k^2}
 =\frac{\pi^2}{384}<1.
\]
Also, $b_1\ge T_1=8$, and
\[
 \frac{b_{k+1}}{b_k}
 =8\frac{1+d_{k+1}}{1+d_k}
 \ge\frac8{1+d_k}\ge\frac{32}{5}>4.
\]
Thus Lemma~\ref{lem:far} applies.
For $u>0$, the terms of the series in \eqref{eq:moment-series} are
\[
 p_kb_k^{u-2}
 =\frac{(1+d_k)^{u-2}}{64k^2}T_k^{u-r}.
\]
Since $1\le1+d_k\le5/4$, the factor $(1+d_k)^{u-2}$ is bounded above
and below by positive constants depending only on $u$.
This remains true when $0<u<2$.
Consequently,
\[
 \sum_kp_kb_k^{u-2}\asymp_{r,u}
 \sum_k\frac{8^{(u-r)k}}{k^2}.
\]
If $u\le r$, this series is bounded by $\sum_k k^{-2}$.
If $u>r$, its terms do not tend to zero, since an exponential factor
with base greater than 1 grows faster than $k^2$.
Hence \eqref{eq:moment-series} gives \eqref{eq:exact-threshold}.

Now suppose that $2\le r<5$.
For $n_k=T_k^2$ we have $t_k=T_k$ and $b_k/t_k=1+d_k$.
For $j<k$ and $j>k$, respectively, we have
\[
 \frac{b_j}{T_k}\le\frac54\,8^{j-k}\le\frac5{32}<\frac14,
 \qquad
 \frac{b_j}{T_k}\ge8^{j-k}\ge8>4.
\]
Also, $d_k\to0$. Thus it remains to verify $E_k=o(d_k)$ in order to apply
Lemma~\ref{lem:detune}.

First let $2<r<5$, so $0<s<3$, and put $\theta=\min\{s,1\}$.
If $0<s\le1$, then $\theta=s>s/3$.
If $1<s<3$, then $\theta=1>s/3$.
In both cases, $s/3<\theta\le1$ and $\theta\le s$.
Moreover,
\[
 p_jb_j^\theta
 =\frac{(1+d_j)^\theta}{64j^2}T_j^{\theta-s}
 \le\frac{(5/4)^\theta}{64j^2}.
\]
Hence $M_\theta:=\sum_jp_jb_j^\theta<\infty$.
For $0<c<1$ and $0<\theta\le1$, we have $c\le c^\theta$.
For $c>1$ and $\theta>0$, we have $1\le c^\theta$.
Applying these inequalities with $c=b_j/T_k$ to the two sums in
\eqref{eq:background-size} gives
\begin{align*}
 E_k
 &\le\sum_{j<k}p_j\left(\frac{b_j}{T_k}\right)^\theta
     +\sum_{j>k}p_j\left(\frac{b_j}{T_k}\right)^\theta\\
 &\le T_k^{-\theta}M_\theta.
\end{align*}
Using the formula for $d_k$, we obtain
\[
 \frac{E_k}{d_k}
 \le4M_\theta k^{2/3}T_k^{-\theta+s/3}
 =4M_\theta k^{2/3}8^{-k(\theta-s/3)}\longrightarrow0.
\]
Indeed, $\theta-s/3>0$, and the logarithm of the last varying factor is
$(2/3)\log k-k(\theta-s/3)\log8$, which tends to $-\infty$.

For $r=2$, we have $p_j=1/(64j^2)$ and $d_k=1/(4k^{2/3})$.
If $j\le k/2$, then $b_j/T_k\le(5/4)8^{-k/2}$.
If $k/2<j<k$, then $b_j/T_k\le5/32<1$.
It follows that
\begin{align*}
 E_k
 &\le\frac54\,8^{-k/2}\sum_{j\le k/2}p_j
      +\sum_{k/2<j<k}p_j+\sum_{j>k}p_j\\
 &\le\frac54\,8^{-k/2}\sum_jp_j+\sum_{j>k/2}p_j.
\end{align*}
For $k\ge2$, the integral bound for the last sum gives
\[
 \sum_{j>k/2}p_j
 =\frac1{64}\sum_{j>\lfloor k/2\rfloor}\frac1{j^2}
 \le\frac1{64}\int_{\lfloor k/2\rfloor}^{\infty}\frac{dx}{x^2}
 =\frac1{64\lfloor k/2\rfloor}=O(k^{-1}).
\]
Thus $E_k=O(k^{-1})$ and $E_k/d_k=O(k^{-1/3})\to0$.
In both cases, Lemma~\ref{lem:detune} gives \eqref{eq:family-lower}.
Finally, if $\gamma>(r-2)/6$, then
\[
 n_k^\gamma L(\mu_{n_k},\bbern)
 \ge\frac1{32k^{2/3}}64^{k(\gamma-(r-2)/6)}\longrightarrow\infty.
\]
This proves \eqref{eq:sharp-exponent} for the single distribution already
defined in \eqref{eq:family}.
\end{proof}

The factor $k^{-2/3}$ remains in the lower bound in Theorem~\ref{thm:family}.
Thus this bound is consistent with the little-$o$ estimate in
Theorem~\ref{thm:r-upper} for $2<r<5$.
Optimality means that, in general, the exponent cannot be increased by any positive amount.

\begin{proposition}\label{prop:drift}
Suppose that $\rho$ is supported on $[0,\infty)$,
$\rho(\{0\})>0$, and $0<\int x\,d\rho(x)<\infty$.
For $\mu=\Phi(\rho)$,
\begin{equation}\label{eq:drift}
 \liminf_{n\to\infty}\sqrt n\,L(\mu_n,\bbern)
 \ge\frac12\int x\,d\rho(x)>0.
\end{equation}
In particular, for the example in \eqref{eq:family} with $r\ge5$,
\[
 0<\liminf_n\sqrt n\,L(\mu_n,\bbern)
 \le\limsup_n\sqrt n\,L(\mu_n,\bbern)<\infty.
\]
\end{proposition}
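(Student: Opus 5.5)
Set $m=\int x\,d\rho(x)$ and $t=\sqrt n$. By \eqref{eq:conjugacy}, $\mu_n=\Phi(\rho_n)$ with $\rho_n=D_{1/t}\rho$, so
\[
 F_{\mu_n}(z)=z-\frac{q}{z}-\int_{(0,\infty)}\frac{\rho(dx)}{z-x/t},\qquad q=\rho(\{0\})>0 .
\]
The aim is to exhibit an atom of $\mu_n$ near $-1$ that is displaced to the left by about $m/(2t)$ and carries mass close to $1/2$. The criterion proved at the end of Lemma~\ref{lem:detune}, applied after reflecting $x\mapsto-x$, then gives the bound. That criterion says an atom of mass $w$ at $v>1$ forces $L(\mu,\bbern)\ge\min\{v-1,w\}$. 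For an atom at $v<-1$ the mirror argument evaluates the defining inequality at $x=-1-\varepsilon$: we have $\mu((-\infty,v])\ge w$ while $\bbern((-\infty,-1-\varepsilon'])=0$ for $\varepsilon'>0$. Checking this carefully, the L\'evy condition at $x=v-\varepsilon$ gives $L\ge\min\{-1-v,w\}$.

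\emph{Locating the zero.} On $(-\infty,0)$ the function $F_{\mu_n}$ is real analytic with
\[
 F_{\mu_n}'(x)=1+\frac{q}{x^2}+\int\frac{\rho(dx')}{(x-x'/t)^2}>0 .
\]
It tends to $-\infty$ at $-\infty$ and to $+\infty$ at $0^-$, since the pole $q/z$ appears with $q>0$. Hence there is exactly one simple zero $\lambda_n<0$, and as in Lemma~\ref{lem:far} it is an atom of mass $w_n=1/F_{\mu_n}'(\lambda_n)$. For $x\in[-2,-1/2]$ and $x'\ge0$ we have $|x-x'/t|\ge|x|$, and
\[
 \frac{1}{x-x'/t}-\frac1x=\frac{x'/t}{x(x-x'/t)} .
\]
Write $\int\rho(dx')/(x-x'/t)=(1-q)/x+R_n(x)$. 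Then $0\le R_n(x)\cdot x^2\le m/t$, and dominated convergence shows that $x^2tR_n(x)\to m$ uniformly on $[-2,-1/2]$. Hence
\[
 F_{\mu_n}(x)=x-\frac1x-\frac{m}{x^2t}+o(1/t).
\]
Near $x=-1$ this is $2(x+1)-m/t+O((x+1)^2)+o(1/t)$. Evaluating at $x=-1+(m/2\pm\eta)/t$ and using monotonicity gives
\[
 \lambda_n=-1+\frac{m}{2t}+o(1/t).
\]

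At this point a sign problem appears: the zero lies to the right of $-1$, that is, \emph{inside} $[-1,1]$. So the displacement is of size $m/(2t)$ toward the center, and the criterion above does not apply directly. Instead I will use the other near-$1$ zero, or compare masses. In the interval $(0,\infty)$ near $1$ the same expansion gives $F(x)\approx 2(x-1)-m/t$. That zero is at $1+m/(2t)>1$, \emph{provided} no pole $x'/t$ of $\rho_n$ interferes near $1$. Poles near $1$ come from the mass of $\rho$ near $t$, which is only $o(1/t)$ in the sense that $\rho([t/2,\infty))\le 2m/t$. Actually $t\rho([t/2,\infty))\to0$ by integrability, so these poles only perturb $F$ by $o(1/t)$ away from a small neighborhood. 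This is the main obstacle.

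\emph{Handling poles near $1$.} I will first try to avoid them by working with $L$ through mass rather than atoms. Let $x_0=1+(m/2-\eta)/t$. Step 3 of Lemma~\ref{lem:two} bounds the mass of $\mu_n$ in $(-\infty,x_0]$ with the Stieltjes-type kernels, using $F$ evaluated at $x_0+iy$ with $y=\eta'/t$. Simpler still, write $\mu_n((-\infty,x_0])=\sum$ of atom masses $1/F'$ at zeros below $x_0$, together with any continuous part. There is the negative zero (mass $\to1/2$) and the zeros in $(0,x_0)$. On $(0,x_0)$ we have $F=x-q/x-R$ with pole contributions from $\rho((0,x_0t])$, whose total mass tends to $1-q$. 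Their zeros are squeezed next to those poles, carrying masses $\asymp$ squared pole weights. The total mass left of $x_0$ should then be at most $1-1/2+o(1)$ minus the mass of the zero near $1+m/(2t)$. I expect the cleanest route to be the counting function $\mu_n((-\infty,x])$ for $x$ between poles. It can be expressed via the argument principle/Krein representation, which gives $\mu_n((-\infty,x_0])=\tfrac12+o(1)$ only from the zero near $-1$ and small atoms. I would verify that this bound implies $L\ge m/(2t)-o(1/t)$ at $x=1$.

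For the second claim, take $\rho$ from \eqref{eq:family}: it has an atom at $0$ and $\int x\,d\rho=\sum p_kb_k<\infty$ when $r>3$. Combining this with \eqref{eq:r-large} and \eqref{eq:exact-threshold}, where $\beta_5(\mu)<\infty$ for $r\ge5$, gives the upper bound.
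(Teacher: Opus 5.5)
There is a genuine gap: the lower bound \eqref{eq:drift} is never actually proved. Your expansion $\lambda_n=-1+\frac{m}{2t}+o(1/t)$ is correct. It is equivalent to the paper's $t(1-u_t)\to\frac12\int x\,d\rho(x)$, which the paper reads off from the zero equation $u_t=\int(u_t+x/t)^{-1}\,d\rho(x)$ by dominated convergence. But you set it aside as a ``sign problem,'' and the inward displacement is exactly what is needed. You only have to add one fact you did not state: $\mu_n$ has no mass to the left of $\lambda_n$. Since $\rho$ lives on $[0,\infty)$, $F_{\mu_n}$ is real-analytic and nonvanishing on $(-\infty,\lambda_n)$. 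So $G_{\mu_n}=1/F_{\mu_n}$ continues analytically with real values across this half-line, and the support criterion \cite[Lemma~2.1]{Hasebe10} gives $\mu_n((-\infty,\lambda_n))=0$. Hence, for $0<\varepsilon<\min\{\lambda_n+1,1/2\}$, the defining inequality of the L\'evy distance fails at $x=-1$:
\[
 \mu_n((-\infty,-1+\varepsilon])+\varepsilon=\varepsilon<\frac12=\bbern((-\infty,-1]).
\]
Thus $L(\mu_n,\bbern)\ge\min\{\lambda_n+1,1/2\}=\frac{m}{2t}+o(1/t)$, which is \eqref{eq:drift}. An atom does not have to lie outside $[-1,1]$ to force a lower bound. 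It suffices that $\mu_n$ puts no mass on $(-\infty,-1+\varepsilon]$ while $\bbern$ puts mass $1/2$ at $-1$.

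The route you substitute at $+1$ is left as a plan (``I expect'', ``I would verify''). Its key intermediate claim $\mu_n((-\infty,x_0])=\frac12+o(1)$ is false for some admissible $\rho$, for two reasons.
\begin{itemize}
\item The bound $t\rho([t/2,\infty))\to0$ controls the poles of $D_{1/t}\rho$ only at distance of order one, not on the scale $1/t$ that matters.
\item The atom attached to a pole of weight $p$ at $c$ is not of order $p^2$. It is about $p/H(c)^2$ when that is small, and it can be large near $\pm1$, where the background $H$ almost vanishes. Compare Lemma~\ref{lem:detune}, where a pole of weight $d^3$ yields an atom of mass at least $d/8$.
\end{itemize}
For a concrete failure, take $\rho=q\delta_0+\sum_kp_k\delta_{b_k}$ with integers $b_k$ growing so fast that $\sum_k1/\log^2b_k<\infty$, weights $p_k=1/(b_k\log^2b_k)$, and $n=b_k^2$. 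Then $D_{1/\sqrt n}\rho$ has a pole of weight $p_k\gg1/n$ exactly at $1$. The half of the mass near $+1$ splits into two atoms of mass about $1/4$ near $1\pm\sqrt{p_k/2}$, and $\sqrt{p_k/2}\gg1/\sqrt n$, so $\mu_n((-\infty,x_0])\approx3/4$. On the negative half-line $D_{1/t}\rho$ has no poles at all; this asymmetry is why the hypothesis that $\rho$ is carried by $[0,\infty)$ makes the argument at $-1$ work.

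Your treatment of the second claim, for the example with $r\ge5$, is fine once \eqref{eq:drift} is available.
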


\begin{proof}
Let $t=\sqrt n$ and $q_0=\rho(\{0\})>0$.
By \eqref{eq:conjugacy}, for $x<0$,
\[
 F_{\mu_n}(x)=x-\int\frac1{x-v/t}\,d\rho(v),\qquad
 F_{\mu_n}'(x)=1+\int\frac1{(x-v/t)^2}\,d\rho(v)>0.
\]
These formulas and differentiation are valid because the support of $\rho$
is nonnegative and, on each compact subinterval of $(-\infty,0)$,
the denominators are bounded away from zero uniformly in $v$.
Since $\int v\,d\rho(v)>0$, the set $(0,\infty)$ has positive $\rho$-mass.
Therefore
\[
 F_{\mu_n}(-1)=-1+\int\frac1{1+v/t}\,d\rho(v)<0.
\]
Also, $F_{\mu_n}(x)\ge x-q_0/x\to+\infty$ as $x\uparrow0$.
Strict monotonicity now gives exactly one negative zero $-u_t$, with
$0<u_t<1$.
On the negative real axis away from this zero, $G_{\mu_n}=1/F_{\mu_n}$
extends analytically with real values. The support criterion
\cite[Lemma~2.1]{Hasebe10} therefore excludes any point of
$\operatorname{supp}\mu_n$ to the left of $-u_t$.

The zero equation $F_{\mu_n}(-u_t)=0$ reads
\[
 u_t=\int\frac1{u_t+x/t}\,d\rho(x).
\]
The atom at zero contributes $q_0/u_t$ to this integral.
Thus $u_t\ge q_0/u_t$, or $u_t^2\ge q_0$.
Multiplying the zero equation by $u_t$ and subtracting from
$1=\int1\,d\rho$ gives
\[
 1-u_t^2=\int\frac{x/t}{u_t+x/t}\,d\rho(x).
\]
The integrand lies between 0 and 1.
For each fixed $x\ge0$, it tends to zero because $u_t\ge\sqrt{q_0}$.
The dominated convergence theorem therefore gives $u_t^2\to1$,
and hence $u_t\to1$.
Multiplying the last identity by $t$, we obtain
\[
 t(1-u_t^2)=\int\frac{x}{u_t+x/t}\,d\rho(x)
 \longrightarrow\int x\,d\rho(x).
\]
For this application of dominated convergence, the pointwise limit of the
integrand is $x$, and
\[
 0\le\frac{x}{u_t+x/t}\le\frac{x}{\sqrt{q_0}}.
\]
The right-hand side is integrable by the assumed finite first moment of $\rho$.
It follows that
\[
 t(1-u_t)=\frac{t(1-u_t^2)}{1+u_t}
 \longrightarrow\frac12\int x\,d\rho(x).
\]

If $0<\varepsilon<\min\{1-u_t,1/2\}$, then $-1+\varepsilon<-u_t$.
Since $\mu_n$ has no mass to the left of $-u_t$,
\[
 \mu_n((-\infty,-1+\varepsilon])=0
 <\frac12-\varepsilon=\bbern((-\infty,-1])-\varepsilon.
\]
The defining inequality for L\'evy distance therefore fails at $x=-1$,
so $L(\mu_n,\bbern)\ge\min\{1-u_t,1/2\}$.
Since $u_t\to1$, the minimum equals $1-u_t$ for all sufficiently large $t$.
Multiplying by $t$ and taking the lower limit proves \eqref{eq:drift}.

For the family in \eqref{eq:family} with $r\ge5$,
\[
 0<\int x\,d\rho(x)=\sum_kp_kb_k
 =\frac1{64}\sum_k\frac{1+d_k}{k^2}T_k^{3-r}
 \le\frac5{256}\sum_k\frac{T_k^{3-r}}{k^2}<\infty.
\]
Also, $\rho(\{0\})>0$ and $\beta_r(\mu)<\infty$ by Theorem~\ref{thm:family}.
Thus \eqref{eq:drift} supplies the positive lower limit, and
Theorem~\ref{thm:r-upper} supplies the finite upper limit.
\end{proof}

The exponent $1/2$ cannot be improved even for bounded support.
Indeed, Arizmendi--Salazar \cite[Example~1]{AS18} exhibit the centered,
variance-one two-point distribution $\mu=\Phi(\delta_1)$ with
\[
 F_\mu(z)=\frac{z^2-z-1}{z-1},\qquad
 L(\mu_n,\bbern)\ge\frac1{6\sqrt n}\quad(n\ge1).
\]
All absolute moments of this distribution are finite, so the exponent $1/2$ cannot be improved even for distributions with compact support.


\section{The case of only a finite second moment}\label{sec:variance}
\begin{proposition}\label{prop:variance}
If $m_1(\mu)=0$ and $m_2(\mu)=1$, then $L(\mu_n,\bbern)\to0$.
\end{proposition}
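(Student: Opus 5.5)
The plan is to use the intermediate bound \eqref{eq:tail-bound} from the proof of Theorem~\ref{thm:r-upper}. The first part of that proof used only $m_1(\mu)=0$ and $m_2(\mu)=1$, so it already gives
\[
 L(\mu_n,\bbern)\le C\bigl(A_n^{1/2}+B_n^{1/3}\bigr),
\]
with
\[
 A_n=\frac{1+\int_{|x|\le\sqrt n/4}x^4\,\mu(dx)}{n},\qquad
 B_n=\int_{|x|>\sqrt n/4}x^2\,\mu(dx).
\]
It therefore suffices to show that $A_n\to0$ and $B_n\to0$.

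The tail term is immediate. Since $x^2$ is $\mu$-integrable and the cutoff $\sqrt n/4$ tends to infinity, dominated (or monotone) convergence gives $B_n\to0$.

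For the central term, I fix $M>0$ and split $\{|x|\le\sqrt n/4\}$ into $\{|x|\le M\}$ and $\{M<|x|\le\sqrt n/4\}$.

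\begin{itemize}
\item On $\{|x|\le M\}$ the contribution to $n^{-1}\int x^4\,d\mu$ is at most $M^4/n$.
\item On $\{M<|x|\le\sqrt n/4\}$ I use $x^4\le (n/16)x^2$. This bounds the contribution by $\frac1{16}\int_{|x|>M}x^2\,\mu(dx)$.
\end{itemize}

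Together with the term $1/n$, this gives
\[
 \limsup_{n\to\infty}A_n\le\frac1{16}\int_{|x|>M}x^2\,\mu(dx).
\]
Letting $M\to\infty$ shows that $A_n\to0$. Combining this with $B_n\to0$ in \eqref{eq:tail-bound} gives $L(\mu_n,\bbern)\to0$.

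There is no real obstacle here. The only point needing care is confirming that \eqref{eq:tail-bound} was established without any moment assumption beyond the second. The proof of Theorem~\ref{thm:r-upper} states this explicitly, and every step there (the bound on $a$, the estimate of $S_n$, and Lemma~\ref{lem:two}) used only finiteness of $m_2$. The truncation argument for $A_n$ is the standard Lindeberg-type step.
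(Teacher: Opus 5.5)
Your proposal is correct and follows the paper's proof: both deduce the result from the intermediate bound \eqref{eq:tail-bound}, which uses only $m_1(\mu)=0$ and $m_2(\mu)=1$, by showing that $A_n\to0$ and $B_n\to0$. The only cosmetic difference is in the treatment of $A_n$. The paper applies dominated convergence directly to $\frac{x^4}{n}\mathbf 1_{\{|x|\le\sqrt n/4\}}$, which tends to zero pointwise and is dominated by $x^2/16$, whereas you carry out the equivalent truncation at level $M$ by hand.
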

\begin{proof}
This is the Boolean central limit theorem \cite[Theorem~3.4]{SW97}.
For completeness, we record its short deduction from \eqref{eq:tail-bound},
whose derivation used only the second moment.
The two integrands below converge pointwise to zero, and
\[
 0\le x^2\mathbf1_{\{|x|>\sqrt n/4\}}\le x^2,\qquad
 0\le\frac{x^4}{n}\mathbf1_{\{|x|\le\sqrt n/4\}}\le\frac{x^2}{16}.
\]
Dominated convergence gives $B_n\to0$ and $A_n\to0$.
Hence \eqref{eq:tail-bound} yields
$L(\mu_n,\bbern)\le C(A_n^{1/2}+B_n^{1/3})\to0$.
\end{proof}

The example with $r=2$ in Theorem~\ref{thm:family} has infinite absolute moments
of every order greater than two. For all sufficiently large $k$, it satisfies
\[
 L(\mu_{64^k},\bbern)\ge\frac1{32k^{2/3}}.
\]
For every $\gamma>0$, it follows that
\[
 (64^k)^\gamma L(\mu_{64^k},\bbern)
 \ge\frac{64^{\gamma k}}{32k^{2/3}}\longrightarrow\infty,
\]
because the logarithm of the right-hand side is
$\gamma k\log64-(2/3)\log k-\log32$, which tends to infinity.
Thus no positive polynomial rate can be guaranteed from the second moment alone.
By choosing the atom locations while keeping their masses fixed, we obtain the following result.

\begin{theorem}[Arbitrarily slow convergence]\label{thm:arbitrary-slow}
For any positive sequence $\varepsilon_n\to0$, there is a single probability measure
$\mu$ with mean 0 and variance 1 such that $\beta_u(\mu)=\infty$ for every $u>2$ and
\begin{equation}\label{eq:arbitrary-slow}
 L(\mu_n,\bbern)\longrightarrow0,
 \qquad
 \limsup_{n\to\infty}\frac{L(\mu_n,\bbern)}{\varepsilon_n}=\infty.
\end{equation}
\end{theorem}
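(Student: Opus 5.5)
We will reuse the construction of Lemma~\ref{lem:far} and the detuning criterion of Lemma~\ref{lem:detune}. Keep the masses of the $r=2$ example, $p_k=1/(64k^2)$, so $d_k=p_k^{1/3}=1/(4k^{2/3})$ and $\sum_kp_k<1$. Choose the scales $T_k=8^{m_k}$ with a rapidly increasing integer sequence $m_k$, and set $b_k=T_k(1+d_k)$, $n_k=T_k^2$, $\rho=(1-\sum p_k)\delta_0+\sum p_k\delta_{b_k}$ and $\mu=\Phi(\rho)$. By \eqref{eq:Phi}, $\mu$ has mean $0$ and variance $1$. The growth of $m_k$ is dictated by $\varepsilon_n$. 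Since $\varepsilon_n\to0$, we may inductively choose $m_k>m_{k-1}+k$, in fact $m_k\ge m_{k-1}+1$ large, such that
\[
\sup_{n\ge 64^{m_k}}\varepsilon_n\le \frac{d_k}{8k}.
\]
Then Lemma~\ref{lem:detune} will give $L(\mu_{n_k},\bbern)\ge d_k/8\ge k\,\varepsilon_{n_k}$, hence $\limsup L(\mu_n,\bbern)/\varepsilon_n=\infty$.

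First, we check the hypotheses of Lemma~\ref{lem:far}: $b_1\ge8$ and $b_{k+1}/b_k\ge 8^{m_{k+1}-m_k}/(5/4)\ge 32/5>4$. Next, the moment claim: by \eqref{eq:moment-series}, $\beta_u(\mu)<\infty$ iff $\sum_kp_kb_k^{u-2}<\infty$. For $u>2$ the terms are at least $T_k^{u-2}/(64k^2)\ge 8^{k(u-2)}/(64k^2)\to\infty$ because $m_k\ge k$, so $\beta_u(\mu)=\infty$. The convergence $L(\mu_n,\bbern)\to0$ is Proposition~\ref{prop:variance}.

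Then we verify the hypotheses of Lemma~\ref{lem:detune} along $t_k=T_k$. We have $b_k/t_k=1+d_k$ exactly and $d_k\to0$. For $j<k$, $b_j/T_k\le\frac54 8^{m_j-m_k}\le 5/32$; for $j>k$, $b_j/T_k\ge8$. The remaining step, and the only place needing care, is $E_k=o(d_k)$. As in the $r=2$ case of Theorem~\ref{thm:family}, we bound
\[
E_k\le \sum_{j<k}p_j\,\frac54\,8^{m_j-m_k}+\sum_{j>k}p_j\le \frac54\,8^{m_{k-1}-m_k}+O(k^{-1}).
\]
Since $m_k-m_{k-1}\ge k$, this is $O(k^{-1})$, which is $o(k^{-2/3})=o(d_k)$. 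This is actually easier than the $r=2$ case, because the spacing is faster. Lemma~\ref{lem:detune} then yields $L(\mu_{n_k},\bbern)\ge d_k/8$ for large $k$, and combined with the choice of $m_k$ this gives \eqref{eq:arbitrary-slow}.

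The main obstacle is only bookkeeping: making sure the inductive choice of $m_k$ simultaneously secures the spacing $m_k-m_{k-1}\ge k$ and the smallness $\varepsilon_{64^{m_k}}\le d_k/(8k)$. Both are monotone requirements, so taking $m_k$ large enough satisfies them together.
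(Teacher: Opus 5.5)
Your proposal is correct and follows essentially the same route as the paper. You keep the $r=2$ masses $p_k=1/(64k^2)$ and choose the scales $T_k$ inductively so that both the drift part of $E_k$ and $\varepsilon_{T_k^2}$ are small relative to $d_k$, then apply Lemma~\ref{lem:far}, Lemma~\ref{lem:detune} and Proposition~\ref{prop:variance}; the only cosmetic difference is that you use $T_k=8^{m_k}$ with gaps $m_k-m_{k-1}\ge k$ (bounding the drift term by $\frac54 8^{-k}$) where the paper imposes $\frac1{T_k}\sum_{j<k}p_jb_j\le d_k/k$ directly.
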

\begin{proof}
\emph{Step 1: Choice of the atom locations.}
Set $p_k=1/(64k^2)$ and $d_k=p_k^{1/3}=1/(4k^{2/3})$.
We choose positive integers $T_k$ inductively, and then put $b_k=T_k(1+d_k)$.
At each step, we require
\begin{equation}\label{eq:recursive}
 \begin{gathered}
 T_1\ge8,\qquad T_k\ge8T_{k-1}\quad(k\ge2),\\
 \frac1{T_k}\sum_{j<k}p_jb_j\le\frac{d_k}{k},\qquad
 \varepsilon_{T_k^2}\le\frac{d_k}{k^2}.
 \end{gathered}
\end{equation}
To justify this choice, suppose that $b_1,\ldots,b_{k-1}$ have already been chosen,
and write $S_k=\sum_{j<k}p_jb_j$. This is a finite number; for $k=1$, it is zero.
Since $\varepsilon_n\to0$ and $d_k/k^2>0$, there exists $N_k$ such that
\[
 \varepsilon_n\le d_k/k^2\qquad(n\ge N_k).
\]
Choose an integer $T_k$ which is at least $8$, $kS_k/d_k$, and $\sqrt{N_k}$,
and, if $k\ge2$, at least $8T_{k-1}$.
Then $S_k/T_k\le d_k/k$ and $T_k^2\ge N_k$, so all conditions in
\eqref{eq:recursive} hold. This uses convergence of $\varepsilon_n$, without a monotonicity assumption.

\smallskip\noindent
\emph{Step 2: Definition of a single distribution and its moments.}
After choosing the whole sequence, define
\[
 \rho=\left(1-\sum_{k\ge1}p_k\right)\delta_0
       +\sum_{k\ge1}p_k\delta_{b_k},\qquad \mu=\Phi(\rho).
\]
Here $\sum p_k=\pi^2/384<1$, so $\rho$ is a probability measure with positive mass at zero.
Moreover, $d_k\le1/4$ and
\[
 T_k\ge8^k,\qquad b_1\ge8,\qquad
 \frac{b_{k+1}}{b_k}
 =\frac{T_{k+1}}{T_k}\frac{1+d_{k+1}}{1+d_k}
 \ge\frac8{5/4}=\frac{32}{5}>4.
\]
Thus Lemma~\ref{lem:far} applies. For every $u>2$,
\[
 \sum_kp_kb_k^{u-2}
 \ge\frac1{64}\sum_k\frac{8^{(u-2)k}}{k^2}=\infty.
\]
Indeed, the summands on the right do not tend to zero:
their logarithms equal $(u-2)k\log8-2\log k$ and tend to infinity.
It follows from \eqref{eq:moment-series} that $\beta_u(\mu)=\infty$ for all $u>2$.
The mean and variance of $\mu$ are 0 and 1 by the definition of $\Phi$.
This defines one measure $\mu$ for the given sequence $\varepsilon_n$.

\smallskip\noindent
\emph{Step 3: The lower bound along the chosen times.}
Set $n_k=T_k^2$, so that $t_k=\sqrt{n_k}=T_k$.
The $k$th atom of $\rho$ is mapped to
$b_k/T_k=1+d_k$.
For $j<k$, repeated use of $T_{\ell+1}\ge8T_\ell$ gives
\[
 \frac{b_j}{T_k}
 \le\frac54\frac{T_j}{T_k}
 \le\frac54\,8^{j-k}\le\frac5{32}<\frac14.
\]
For $j>k$, the same growth condition gives
\[
 \frac{b_j}{T_k}\ge\frac{T_j}{T_k}\ge8^{j-k}\ge8>4.
\]
Thus the separation conditions in Lemma~\ref{lem:detune} hold.
The last condition to check is $E_k=o(d_k)$.
Since $x\mapsto x^{-2}$ is decreasing,
\[
 \sum_{j>k}p_j
 =\frac1{64}\sum_{j=k+1}^\infty\frac1{j^2}
 \le\frac1{64}\int_k^\infty\frac{dx}{x^2}
 =\frac1{64k}.
\]
Therefore \eqref{eq:background-size} and \eqref{eq:recursive} imply
\[
 E_k\le\frac{d_k}{k}+\frac1{64k},
 \qquad
 \frac{E_k}{d_k}\le\frac1k+\frac1{16k^{1/3}}\longrightarrow0.
\]
Since also $d_k\to0$, Lemma~\ref{lem:detune} yields
$L(\mu_{n_k},\bbern)\ge d_k/8$ for all sufficiently large $k$.
Using the last condition in \eqref{eq:recursive}, we obtain
\[
 \frac{L(\mu_{n_k},\bbern)}{\varepsilon_{n_k}}
 \ge\frac{d_k/8}{d_k/k^2}
 =\frac{k^2}{8}\longrightarrow\infty.
\]
The sequence $n_k$ tends to infinity, so this proves the required infinite limit superior.
Finally, Proposition~\ref{prop:variance} applies to this same fixed $\mu$ and gives
$L(\mu_n,\bbern)\to0$ along the full sequence.
\end{proof}

In this theorem, a single $\mu$ is fixed once the prescribed rate sequence is given.
Thus, even if a finite constant $C(\mu)$ is allowed to depend on the distribution,
the bound $C(\mu)\varepsilon_n$ cannot be guaranteed for all distributions with mean 0 and variance 1.


\end{document}